\documentclass{amsart}[11pt]
\usepackage{amssymb}
\usepackage{verbatim}
\usepackage{hyperref}

\usepackage[margin=1.35in]{geometry}

\newtheorem{exercise}{Exercise}
\newtheorem{lem}[exercise]{Lemma}
\newtheorem{prop}[exercise]{Proposition}

\newtheorem{theorem}[exercise]{Theorem}

\theoremstyle{definition}
\newtheorem{example}{Example}

\newcommand{\C}{\mathbb{C}}
\newcommand{\code}{\mathrm{code}}

\newcommand{\Fl}{\mathrm{Fl}}
\newcommand{\Id}{\mathrm{Id}}
\newcommand{\ip}[2]{\langle #1\,,\,#2\rangle}
\newcommand{\la}{\lambda}
\newcommand{\ladot}{{\lambda^\bullet}}

\newcommand{\nd}{{n_\bullet}}
\newcommand{\pt}{\mathrm{pt}}

\newcommand{\qSchub}{\tilde{\Schub}}
\newcommand{\qt}{\theta}

\newcommand{\Schub}{\mathfrak{S}}

\newcommand{\wleq}{\le^L}

\newcommand{\Y}{\mathbb{Y}}
\newcommand{\Z}{\mathbb{Z}}

\title[Quantum double Schubert polynomials]{Quantum double Schubert polynomials represent Schubert classes}
\author{Thomas Lam}
\address{Department of Mathematics,
University of Michigan, 530 Church St., Ann Arbor, MI 48109 USA}
\email{tfylam@umich.edu}
 \thanks{T.L. was supported by NSF grant DMS-0901111, and by a Sloan Fellowship.}
\author{Mark Shimozono}
\address{Department of Mathematics, Virginia Tech, Blacksburg, VA 24061-0123 USA}
\email{mshimo@vt.edu}
\thanks{M.S. was supported by NSF DMS-0652641 and DMS-0652648.}
\begin{document}
\begin{abstract} The quantum double Schubert polynomials studied by Kirillov and Maeno, and by Ciocan-Fontanine and Fulton, are shown to
represent Schubert classes in Kim's presentation of the equivariant quantum cohomology of the flag variety.  We define parabolic analogues of quantum double Schubert polynomials, and show that they represent Schubert classes in the equivariant quantum cohomology of partial flag varieties.  For complete flags Anderson and Chen \cite{AC} have announced a proof with different methods.
\end{abstract}

\maketitle


\section{Introduction}
Let $H^*(\Fl)$, $H^T(\Fl)$, $QH^*(\Fl)$, and $QH^T(\Fl)$ be the 
ordinary, $T$-equivariant, quantum, and $T$-equivariant quantum cohomology rings of the variety $\Fl=\Fl_n$
of complete flags in $\C^n$ where $T$ is the maximal torus of $GL_n$.  All cohomologies are with $\Z$ coefficients.   The flag variety $\Fl_n$ has a stratification by Schubert varieties $X_w$, labeled by permutations $w \in S_n$, which gives rise to {\it Schubert bases} for each of these rings.

This paper is concerned with the problem of finding polynomial representatives for the Schubert bases in a ring presentation of these (quantum) cohomology rings.  These ring presentations are due to Borel \cite{Bo} in the classical case, and Ciocan-Fontanine \cite{Cio}, Givental and Kim \cite{GK} and Kim \cite{Kim} in the quantum case.  This is a basic problem in classical and quantum Schubert calculus.

This problem has been solved in the first three cases: the Schubert polynomials are known to represent Schubert classes in $H^*(\Fl_n)$ by work of Bernstein, Gelfand, and Gelfand \cite{BGG} and Lascoux and Sch\"{u}tzenberger \cite{LS}; the double Schubert polynomials, also due to Lascoux and Sch\"{u}tzenberger, represent Schubert classes in $H^T(\Fl_n)$ (see for example \cite{Bi}); and the quantum Schubert polynomials of Fomin, Gelfand, and Postnikov \cite{FGP} represent Schubert classes in $QH^*(\Fl)$.  These polynomials are the subject of much research by combinatorialists and geometers and we refer the reader to these references for a complete discussion of these ideas.  Our first main result (Theorem \ref{T:main}) is that the quantum double Schubert polynomials of \cite{KM,CF} represent equivariant quantum Schubert classes in $QH^T(\Fl)$. Anderson and Chen \cite{AC} have announced a proof of this result
using the geometry of Quot schemes.

Now let $SL_n/P$ be a partial flag variety, where $P$ denotes a parabolic subgroup of $SL_n$.  In non-quantum Schubert calculus, the functorality of (equivariant) cohomology implies that the (double) Schubert polynomials labeled by minimal length coset representatives again represent Schubert classes in $H^*(SL_n/P)$ or $H^T(SL_n/P)$.  This is not the case in quantum cohomology.  Ciocan-Fontanine \cite{Cio2} solved the corresponding problem in $QH^*(SL_n/P)$, extending Fomin, Gelfand, and Postnikov's work to the parabolic case.  Here we introduce the parabolic quantum double Schubert polynomials.  We show that these polynomials represent Schubert classes in the torus-equivariant quantum cohomology $QH^T(SL_n/P)$ of a partial flag variety.  
Earlier, Mihalcea \cite{Mi2} had found polynomial representatives for the Schubert basis in the special case of the equivariant quantum cohomology of the Grassmannian.

\medskip
{\bf Acknowledgements.} We thank Linda Chen for communicating to us her joint work with Anderson \cite{AC}.

\section{The (quantum) cohomology rings of flag manifolds}

\subsection{Presentations}
Let $x=(x_1,\dotsc,x_n)$, $a=(a_1,\dotsc,a_n)$, and $q=(q_1,\dotsc,q_{n-1})$
be indeterminates. We work in the graded polynomial ring $\Z[x;q;a]$ with
$\deg(x_i)=\deg(a_i)=1$ and $\deg(q_i)=2$.
Let $S=\Z[a]$ be identified with $H^T(\pt)$ and let $e_j(a_1,\dotsc,a_n)$ be the elementary symmetric polynomial.
Let $C_n$ be the tridiagonal $n\times n$ matrix with entries $x_i$ on the diagonal, $-1$ on the superdiagonal,
and $q_i$ on the subdiagonal. Define the polynomials $E_j^n\in \Z[x;q]$ by
\begin{align*}
  \det(C_n-t \,\Id) = \sum_{j=0}^n (-t)^{n-j} E_j^n.
\end{align*}
Let $J$ (resp. $J^a$, $J^q$, $J^{qa}$) be the ideal in $\Z[x]$ (resp. $S[x]$, $\Z[x;q]$, $S[x;q]$)
generated by the elements $e_j(x)$ (resp. $e_j(x)-e_j(a)$; $E_j^n$; $E_j^n-e_j(a)$) for $1\le j\le n$;
in all cases the $j$-th generator is homogeneous of degree $j$.  We have
\begin{align}
\label{E:H}
  H^*(\Fl)&\cong \Z[x]/J \\
\label{E:HT}
  H^T(\Fl)&\cong S[x]/J^a \\
\label{E:QH}
  QH^*(\Fl)&\cong \Z[x;q]/J^q \\
\label{E:QHT}
  QH^T(\Fl)&\cong S[x;q]/J^{qa}.
\end{align}
as algebras over $\Z$, $S$, $\Z[q]$, and $S[q]$ respectively.  The presentation of $H^*(\Fl)$ is a classical result due to Borel.
The presentations of $QH^*(\Fl)$ and $QH^T(\Fl)$ are due to Ciocan-Fontanine \cite{Cio},
Givental and Kim \cite{GK} and Kim \cite{Kim}.

\subsection{Schubert bases}
Let $X_w=\overline{B_-wB/B}\subset\Fl$ be the opposite Schubert variety, where $w\in W = S_n$ is a permutation, $B\subset SL_n$
is the upper triangular Borel and $B_-$ the opposite Borel. The ring $H^*(\Fl)$ (resp. $H^T(\Fl)$)
has a basis over $\Z$ (resp. $S$) denoted $[X_w]$ (resp. $[X_w]_T$) associated with the Schubert varieties.

Given three elements $u,v,w\in W$ and an element of the coroot lattice $\beta\in Q^\vee$
one may define a genus zero Gromov-Witten invariant $c^w_{uv}(\beta) \in\Z_{\ge0}$ (see \cite{GK,Kim})
and an associative ring $QH^*(\Fl)$ with $\Z[q]$-basis $\{\sigma^w\mid w\in W\}$
(called the quantum Schubert basis) such that 
\begin{align*}
  \sigma^u \sigma^v = \sum_{w,\beta} q_\beta c^w_{uv}(\beta) \sigma^w
\end{align*}
where $q_\beta = \prod_{i=1}^{n-1} q_i^{k_i}$ where $\beta=\sum_{i=1}^{n-1} k_i \alpha_i^\vee$ for $k_i\in\Z$.
Similarly there is a basis of a ring $QH^T(\Fl)$ with $S[q]$-basis given by
the equivariant quantum Schubert classes $\sigma^w_T$, defined using equivariant Gromov-Witten invariants,
which are elements of $S$.

We shall use the following characterization of $QH^T(\Fl)$ and its Schubert basis $\{\sigma^w_T\mid w\in W\}$ due to Mihalcea \cite{Mi}.
Let $\Phi^+$ be the set of positive roots and $\rho$ the half sum of positive roots.
For $w\in W$ define
\begin{align*}
A_w &= \{\alpha\in \Phi^+\mid ws_\alpha\gtrdot w \} \\
B_w &= \{\alpha\in \Phi^+\mid \ell(ws_\alpha)=\ell(w)+1-\ip{\alpha^\vee}{2\rho} \}.
\end{align*}
Let $\omega_i(a)=a_1+\dotsm+a_i\in S$ be the fundamental weight.
We write $A_w^n$ and $B_w^n$ to emphasize that the computation pertains to $\Fl=SL_n/B$.

\begin{theorem} \label{T:QHTchar} \cite[Corollary 8.2]{Mi}
For $w\in S_n$ and $1\le i\le n-1$ a Dynkin node, the equivariant quantum Schubert classes $\sigma^w_T$
satisfy the equivariant quantum Chevalley-Monk formula
\begin{align}\label{E:EQChev}
  \sigma^{s_i}_T \sigma^w_T &= (-\omega_i(a) + w\cdot\omega_i(a)) + 
  \sum_{\alpha\in A_w^n} \ip{\alpha^\vee}{\omega_i} \sigma_T^{ws_\alpha} +
  \sum_{\alpha\in B_w^n} q_{\alpha^\vee} \ip{\alpha^\vee}{\omega_i} \sigma_T^{ws_\alpha}.
\end{align}
Moreover these structure constants determine the Schubert basis $\{\sigma^w_T\mid w\in S_n\}$
and the ring $QH^T(SL_n/B)$ up to isomorphism as $\Z[q_1,\dotsc,q_{n-1};a_1,\dotsc,a_n]$-algebras.
\end{theorem}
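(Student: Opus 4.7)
The plan is to establish the Chevalley--Monk formula \eqref{E:EQChev} first and then deduce the characterization statement by induction on Bruhat length. For the formula itself, I would begin with the two degenerate limits. Setting $q=0$ recovers the classical $T$-equivariant Chevalley formula in $H^T(\Fl)$, which is standard: either restrict both sides to each $T$-fixed point $v\in S_n$ and use the known restriction value $\sigma^{s_i}_T|_v = v\cdot\omega_i(a)-\omega_i(a)$ (via the GKM presentation), or derive it from the Kostant--Kumar nil-Hecke ring formalism. Setting $a=0$ recovers Fulton--Woodward's non-equivariant quantum Chevalley formula in $QH^*(\Fl)$, which in particular shows that only Bruhat-line classes $\beta=\alpha^\vee$ with $\alpha\in B_w^n$ contribute with $q_\beta\ne 1$ and identifies the coefficients as $\langle\alpha^\vee,\omega_i\rangle$.

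Next I would upgrade these two boundary cases to the full equivariant quantum formula using a bidegree argument. Expanding $\sigma^{s_i}_T\cdot\sigma^w_T$ via equivariant three-point Gromov--Witten invariants yields an $S[q]$-linear combination of Schubert classes of total degree $2\ell(w)+2$ under the grading $\deg q_i=2$, $\deg a_i=1$, $\deg\sigma^v_T=2\ell(v)$. Combined with the vanishing of equivariant Gromov--Witten invariants outside the Bruhat-line regime defining $B_w^n$, and with the fact that equivariant GW invariants are polynomials in $a$ of controlled degree, the only bidegree-compatible terms that can occur are exactly those of the three kinds appearing in \eqref{E:EQChev}. Specializing to $q=0$ and to $a=0$ then pins down the scalar coefficients as stated.

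For the characterization statement, I induct on $\ell(w)$, starting from $\sigma^{\Id}_T=1$. Given $w$ with $\ell(w)\ge 1$, choose $i$ with $\ell(ws_i)<\ell(w)$, set $w'=ws_i$, and apply \eqref{E:EQChev} to $\sigma^{s_i}_T\cdot\sigma^{w'}_T$: the simple root $\alpha_i$ contributes $\sigma^w_T$ with coefficient $\langle\alpha_i^\vee,\omega_i\rangle=1$, while every other term on the right-hand side either involves $\sigma^v_T$ for some $v\ne w$ of length $\le\ell(w)$ or is attached to a nontrivial $q_\beta$, and is therefore determined by the induction hypothesis. Hence $\sigma^w_T$ can be solved for; this both builds the Schubert basis recursively and shows that the resulting $S[q]$-algebra is determined up to isomorphism by the structure constants. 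The main obstacle is the middle step: ruling out unexpected Gromov--Witten contributions in degrees $\beta$ beyond those indexed by $B_w^n$. The cleanest route is to combine the non-equivariant Fulton--Woodward computation with the rigidity of the equivariant extension, rather than attempting a direct dimension count on moduli of stable maps.
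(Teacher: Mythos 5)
This statement is not proved in the paper at all: it is quoted verbatim from Mihalcea's work, as the citation \cite[Corollary 8.2]{Mi} indicates. So there is no in-paper argument to compare against, and the question is whether your self-contained sketch would hold up.

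The two boundary specializations you describe ($q=0$ giving the classical $T$-equivariant Chevalley formula, $a=0$ giving the Fulton--Woodward quantum Chevalley formula) are correct and standard, and the concluding induction on $\ell(w)$ for the ``moreover'' clause is fine. The genuine gap is exactly the one you flag as ``the main obstacle,'' and it is not closed by the bidegree count. A hypothetical term $c(a)\, q_\beta\, \sigma^v_T$ with $c(a)\in S$ homogeneous of positive degree and $\beta\neq 0$ dies under \emph{both} of your specializations, yet it can perfectly well satisfy the degree constraint $\deg c + \ip{\beta}{2\rho} + 2\ell(v) = 2\ell(w)+2$; for instance $c(a)=a_1-a_2$, $\beta=\alpha_i^\vee$, $\ell(v)=\ell(w)-1$ is admissible. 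Ruling out such ``mixed'' terms is the whole content of the equivariant quantum Chevalley theorem, and ``rigidity of the equivariant extension'' is not an available black box --- it is what one must prove. Mihalcea's argument supplies the missing input via an algebraic uniqueness statement (any commutative, associative, graded $S[q]$-deformation of $H^T(\Fl)$ whose divisor products satisfy the two boundary specializations is forced to agree with \eqref{E:EQChev}), which is established by a separate recursion on the degree of $q_\beta$ and on $\ell(w)$, exploiting commutativity $\sigma^{s_i}_T\sigma^{s_j}_T=\sigma^{s_j}_T\sigma^{s_i}_T$. Without that lemma, or an equivariant analogue of the Fulton--Woodward vanishing for two-point invariants that controls the $a$-degree of the $\beta\neq 0$ coefficients, the middle step of your argument does not go through.
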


\section{Quantum double Schubert polynomials}
Now we work with infinite sets of variables $x=(x_1,x_2,\dotsc)$,
$q=(q_1,q_2,\dotsc)$, and $a=(a_1,a_2,\dotsc)$.

\subsection{Various Schubert polynomials}
Let $\partial_i^a = \alpha_i^{-1}(1-s_i^a)$ be the divided difference operator, where $\alpha_i=a_i-a_{i+1}$
and $s_i^a$ is the operator that exchanges $a_i$ and $a_{i+1}$. Since the operators $\partial_i=\partial_i^a$ satisfy the
braid relations one may define $\partial_w=\partial_{i_1}\dotsm \partial_{i_\ell}$ where $w=s_{i_1}\dotsm s_{i_\ell}$
is a reduced decomposition. For $w\in S_n$ define the double Schubert polynomial $\Schub_w(x;a)$ \cite{LS}
and the quantum double Schubert polynomial $\qSchub_w(x;a)\in S[x;q]$ \cite{KM,CF} by
\begin{align}
\label{E:dSchubdef}
  \Schub_w(x;a) &= (-1)^{\ell(ww_0^{(n)})}\partial_{ww_0^{(n)}}^a \prod_{i=1}^{n-1} \prod_{j=1}^i (x_j-a_{n-i}) \\
\label{E:qdSchubdef}
  \qSchub_w(x;a) &= (-1)^{\ell(ww_0^{(n)})}\partial_{ww_0^{(n)}}^a \prod_{i=1}^{n-1} \det(C_i-a_{n-i}\Id)
\end{align}
where $w_0^{(n)}\in S_n$ is the longest element.\footnote{This is not the standard definition of double Schubert polynomial.
However it is easily seen to be equivalent using, say, the identity $\Schub_{w^{-1}}(x;a)=\Schub_w(-a;-x)$ \cite{Mac}.}
Note that it is equivalent to define $\Schub_w(x;a)$ by setting the $q_i$ variables to zero in $\qSchub_w(x;a)$.

Let $S_\infty = \bigcup_{n\ge1} S_n$ be the infinite symmetric group under the embeddings $i_n:S_n\to S_{n+1}$
that add a fixed point at the end of a permutation. Due to the stability property \cite{KM}
$\qSchub_{i_n(w)}(x;a) = \qSchub_w(x;a)$ for $w\in S_n$, the quantum double Schubert polynomials
$\qSchub_w(x;a)$ are well-defined for $w\in S_\infty$. Similarly, $\Schub_w(x;a)$ is well-defined for $w\in S_\infty$.

For $w\in S_\infty$, define the (resp. quantum) Schubert polynomial $\Schub_w(x) = \Schub_w(x;0)$ (resp. $\qSchub_w(x) = \qSchub_w(x;0)$)
by setting the $a_i$ variables to zero in the (resp. quantum) double Schubert polynomial.
Note that $\Schub_w(x)$, $\Schub_w(x;a)$, $\qSchub_w(x)$, and $\qSchub_w(x;a)$ are all homogeneous of degree $\ell(w)$.
The original definition of quantum Schubert polynomial in \cite{FGP} is different.
However their definition and the one used here, are easily seen to be equivalent \cite{KM}, due to the
commutation of the divided differences in the $a$ variables and the quantization map $\qt$ of \cite{FGP},
which we review in \S \ref{SS:qt}.

\begin{lem} \label{L:lead} \cite{Mac} For $w\in S_\infty$,
the term that is of highest degree in the $x$ variables and then
is the reverse lex leading such term, in any of $\Schub_w(x)$,
$\Schub_w(x;a)$, $\qSchub_w(x)$, and $\qSchub_w(x;a)$, is the monomial $x^\code(w)$,
where 
\begin{align*}
  \code(w) &= (c_1,c_2,\dotsc) \\
  c_i &= |\{j\in\Z_{>0} \mid \text{$i<j$ and $w(j)<w(i)$} \}| \qquad\text{for $i\in\Z_{>0}$.}
\end{align*}
\end{lem}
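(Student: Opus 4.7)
The plan is to reduce the four-polynomial statement to a single statement about $\Schub_w(x)$, and then establish the classical fact about its reverse-lex leading monomial. For the reduction: each of the four polynomials is homogeneous of total degree $\ell(w)$ under the grading $\deg(x_i)=\deg(a_i)=1$ and $\deg(q_i)=2$, so any monomial of maximal $x$-degree must be free of $a_i$'s and $q_i$'s. For $\qSchub_w(x;a)$, the top-$x$-degree part of $\prod_i\det(C_i - a_{n-i}\Id)$ comes from the diagonal expansion and equals $\prod_i\prod_j(x_j - a_{n-i})$; since $\partial^a$ preserves $x$-degree, the top-$x$-degree component of $\qSchub_w(x;a)$ coincides with that of $\Schub_w(x;a)$, which in turn is $\Schub_w(x;0)=\Schub_w(x)$. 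A parallel argument shows the top-$x$-degree component of $\qSchub_w(x)$ is also $\Schub_w(x)$. Thus the four reverse-lex-leading monomials of top $x$-degree all coincide with the reverse-lex leading monomial of $\Schub_w(x)$ itself, and the lemma reduces to the statement that this leading monomial is $x^{\code(w)}$ with coefficient $1$.

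For this reduced claim, I would argue by downward induction on $\ell(w)$ via the standard recursion $\partial_i^x \Schub_w = \Schub_{ws_i}$ for descents $w(i)>w(i+1)$, with base case $w = w_0^{(n)}$, where $\Schub_{w_0^{(n)}}(x) = x_1^{n-1}x_2^{n-2}\cdots x_{n-1} = x^{\code(w_0^{(n)})}$ is a single monomial. The key combinatorial input is that when $i$ is a descent of $w$ with $\code(w)_i=p>q=\code(w)_{i+1}$, the code of $ws_i$ differs from $\code(w)$ only at positions $i,i+1$, becoming $(q,p-1)$. Applying $\partial_i^x$ to $x^{\code(w)}$ then yields $\sum_{k=0}^{p-q-1}x_i^{p-1-k}x_{i+1}^{q+k}$ times the unchanged variables, whose reverse-lex leading term is $x_i^qx_{i+1}^{p-1}\prod_{j\ne i,i+1}x_j^{c_j(w)} = x^{\code(ws_i)}$ with coefficient $1$.

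The main obstacle is that $\partial_i^x$ does not preserve reverse-lex order on monomials in general, so one must verify that the reverse-lex-smaller monomials appearing in $\Schub_w$ contribute only reverse-lex-strictly-smaller monomials to $\Schub_{ws_i}$, and that no cancellation disturbs the coefficient of $x^{\code(ws_i)}$. This can be established by direct monomial bookkeeping under $\partial_i^x$, but the cleanest route is to invoke the Billey--Jockusch--Stanley pipe-dream formula $\Schub_w(x)=\sum_D x^{\mathrm{wt}(D)}$ summed with positive integer multiplicities over reduced pipe dreams for $w$: the unique "bottom" pipe dream has weight $\code(w)$, and every other reduced pipe dream has weight strictly reverse-lex smaller, so the leading monomial and its coefficient are read off directly. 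This is the classical statement of Macdonald \cite{Mac} cited in the lemma.
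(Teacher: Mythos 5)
The paper does not actually prove Lemma~\ref{L:lead}; it simply cites Macdonald \cite{Mac}, and the statement is then used as a black box in the proof of Lemma~\ref{L:basisquot} (and again in Section~\ref{S:proofmain}). So there is no in-paper argument to compare yours against. Your proposal supplies a genuine proof where the paper gives only a reference, and the overall structure is sound: the grading reduction (all four polynomials are homogeneous of degree $\ell(w)$ with $\deg q_i = 2$, so the top-$x$-degree part of each must be free of $q$'s and $a$'s, and this part equals $\Schub_w(x)$) correctly collapses the four statements to one classical fact about $\Schub_w(x)$, which is exactly the fact Macdonald proves. Worth flagging: the phrase ``the top-$x$-degree part of $\prod_i\det(C_i - a_{n-i}\Id)$ \dots\ equals $\prod_i\prod_j(x_j - a_{n-i})$'' is imprecise --- the top-$x$-degree part of that product is literally $\prod_i x_1\cdots x_i$; what you want to say is that its $q$-free part equals $\prod_i\prod_{j=1}^i(x_j - a_{n-i})$, and then (since $\deg q_p = 2$) every monomial carrying a $q$ has $x$-degree at most $\ell(w)-2$, so the $x$-degree-$\ell(w)$ part of $\qSchub_w(x;a)$ lives entirely in the $q$-free part, which after applying $\partial^a_{ww_0^{(n)}}$ is $\Schub_w(x;a)$, whose $x$-degree-$\ell(w)$ part is $\Schub_w(x)$. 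With that small rephrasing the reduction is airtight. For the final step you honestly acknowledge that the naive descending induction via $\partial_i^x$ needs careful order bookkeeping (since $\partial_i^x$ need not respect the term order monomial-by-monomial), and then correctly observe that the Billey--Jockusch--Stanley pipe-dream expansion gives a clean, cancellation-free derivation of the leading term $x^{\code(w)}$; either route ends up re-deriving the statement the paper attributes to \cite{Mac}.
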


\begin{lem} \label{L:code} \cite{Mac} There is a bijection from $S_\infty$ to the
set of tuples $(c_1,c_2,\dotsc)$ of nonnegative integers, almost all zero,
given by $w\mapsto\code(w)$. Moreover it restricts to a bijection from $S_n$
to the set of tuples $(c_1,\dotsc,c_n)$ such that $0\le c_i \le n-i$ for all $0\le i\le n$.
\end{lem}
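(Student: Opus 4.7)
The plan is to exhibit an explicit recursive inverse to $w\mapsto\code(w)$ and verify the two compositions; the whole argument is the standard Lehmer code bijection.

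First I would check that $\code(w)$ lies in the asserted range. For $w\in S_n$ (viewed as a permutation of $\Z_{>0}$ with $w(i)=i$ for $i>n$), if $i>n$ then every $j>i$ satisfies $j>n$ hence $w(j)=j>i=w(i)$, giving $c_i=0$. If $1\le i\le n$, then any $j>n$ gives $w(j)=j>n\ge w(i)$, so such $j$ does not contribute to $c_i$; hence $c_i\le n-i$.

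Next I would construct the inverse. Given a finitely-supported tuple $(c_1,c_2,\dotsc)$ of nonnegative integers with $c_i\le n-i$ for $i\le n$ and $c_i=0$ for $i>n$, define $w$ recursively: having chosen $w(1),\dotsc,w(i-1)$, enumerate the set $R_i:=\Z_{>0}\setminus\{w(1),\dotsc,w(i-1)\}$ in increasing order as $r_1^{(i)}<r_2^{(i)}<\cdots$ and set $w(i):=r_{c_i+1}^{(i)}$. The bound $c_i\le n-i$ keeps $w(i)\in\{1,\dotsc,n\}$ for $i\le n$, which forces $\{w(1),\dotsc,w(n)\}=\{1,\dotsc,n\}$; then $c_i=0$ for $i>n$ yields $w(i)=i$, so $w\in S_n\subset S_\infty$.

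Finally I would verify both compositions by induction on $i$. Starting from $w$ with $c_i=\code(w)_i$ and assuming $w(1),\dotsc,w(i-1)$ have been correctly reconstructed, we have $R_i=\{w(i),w(i+1),\dotsc\}$, and the number of its elements strictly less than $w(i)$ equals $|\{j>i:w(j)<w(i)\}|=c_i$, so $w(i)=r_{c_i+1}^{(i)}$. The opposite composition is identical: at each step the constructed $w(i)$ has, by definition of $r_{c_i+1}^{(i)}$, exactly $c_i$ later values smaller than it. There is no substantive obstacle here; both directions reduce to identifying $R_i\cap\{1,\dotsc,w(i)-1\}$ with $\{w(j):j>i,\ w(j)<w(i)\}$.
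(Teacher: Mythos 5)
Your argument is correct and is the standard Lehmer-code bijection, which is exactly what the cited reference \cite{Mac} establishes; the paper itself gives no proof, only the citation. The only minor imprecision is in the first half of the bijection (to all of $S_\infty$): given an arbitrary finitely-supported tuple one must first choose $n$ large enough that $c_i \le n-i$ for all $i$ (possible since almost all $c_i$ vanish) before running your recursive construction, but this is implicit in what you wrote and poses no real gap.
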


\begin{lem} \label{L:basisquot} \
\begin{enumerate}
\item
$\{\Schub_w(x)\mid w\in S_n\}$ is a $\Z$-basis of $\Z[x_1,\dotsc,x_n]/J_n$.
\item
$\{\Schub_w(x;a)\mid w\in S_n\}$ is a $\Z[a]$-basis of $\Z[x_1,\dotsc,x_n;a_1,\dotsc,a_n]/J_n^a$.
\item
$\{\qSchub_w(x)\mid w\in S_n\}$ is a $\Z[q]$-basis of $\Z[x_1,\dotsc,x_n;q_1,\dotsc,q_{n-1}]/J_n^q$.
\item
$\{\qSchub_w(x;a)\mid w\in S_n\}$ is a $\Z[q,a]$-basis of $\Z[x_1,\dotsc,x_n;q_1,\dotsc,q_{n-1};a_1,\dotsc,a_n]/J_n^{q,a}$.
\end{enumerate}
\end{lem}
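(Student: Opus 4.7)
The plan is to deduce all four statements from a single triangularity argument against the staircase monomials
$\mathcal{B}_n = \{x_1^{c_1}\dotsm x_n^{c_n} : 0 \le c_i \le n-i\}$.
By Lemma~\ref{L:code} the map $w\mapsto x^{\code(w)}$ is a bijection $S_n \to \mathcal{B}_n$, so in particular $|\mathcal{B}_n|=n!$. By Lemma~\ref{L:lead} the monomial $x^{\code(w)}$ is the $x$-leading term, with coefficient $1$, in each of $\Schub_w(x)$, $\Schub_w(x;a)$, $\qSchub_w(x)$, and $\qSchub_w(x;a)$. Consequently, once I prove that (the image of) $\mathcal{B}_n$ is a free basis of each of the four quotient rings over the corresponding coefficient ring $R\in\{\Z,\Z[a],\Z[q],\Z[q,a]\}$, ordering permutations by total $x$-degree (refined arbitrarily) makes the transition matrix from $\mathcal{B}_n$ to the Schubert polynomials upper unitriangular over $R$, hence invertible, and each family of Schubert polynomials is also a basis.

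The substance therefore lies in establishing that $\mathcal{B}_n$ gives a basis in each of the four cases. Case (1) is the classical Artin basis theorem for the coinvariant ring $\Z[x_1,\dotsc,x_n]/J_n$. For cases (2)--(4) I would use a flat deformation argument. The crucial observation is that each generator of $J_n^a$, $J_n^q$, and $J_n^{qa}$ has the \emph{same} highest-$x$-degree part as the corresponding generator $e_j(x)$ of $J_n$: this is immediate for $e_j(x)-e_j(a)$, and for $E_j^n$ a direct expansion of $\det(C_n-t\,\Id)$ (using that $C_n$ is tridiagonal with $x_i$ on the diagonal and $q_i$, $-1$ off-diagonal) yields $E_j^n = e_j(x) + (\text{terms of strictly lower $x$-degree, involving $q$})$. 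Therefore the reduction used in case (1) reduces any polynomial in the larger ring, modulo the relevant ideal, to a $\mathcal{B}_n$-combination with coefficients in $R$, proving that $\mathcal{B}_n$ spans the quotient as an $R$-module.

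To upgrade spanning to ``is a basis'' I would observe that under the grading $\deg(x_i)=\deg(a_i)=1$, $\deg(q_i)=2$ all generators are homogeneous, and the associated graded (with respect to the $x$-degree filtration) of each generating set is precisely the classical regular sequence $(e_1(x),\dotsc,e_n(x))$ in the relevant polynomial ring. Standard commutative algebra then lifts regularity and shows that each quotient is free of rank $n!$ over $R$; a spanning set of that cardinality in a free module of that rank is automatically a basis. The main technical point I anticipate is checking that the lift-of-regularity argument really applies uniformly in the quantum equivariant case, where $x$, $q$, and $a$ all interact; fortunately the tridiagonal shape of $C_n$, and the fact that $a$ enters only via the scalar shift $-e_j(a)$, make the associated-graded computation transparent. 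Combined with the triangularity step from the first paragraph, this yields all four statements simultaneously.
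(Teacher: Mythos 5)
Your proposal is correct and takes essentially the same approach as the paper: the paper's proof likewise rests on the observation that the highest $x$-degree part of each ideal generator is $e_j(x)$, so reduction produces representatives with leading $x$-monomial in the staircase set, and unitriangularity (via Lemma \ref{L:lead}) converts between the staircase monomials and the Schubert polynomials. You spell out more explicitly than the paper why the staircase monomials are genuinely free (not merely spanning) over $R$ via the regular-sequence/associated-graded argument, but this is filling in the same line of reasoning rather than a different route.
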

\begin{proof} Since in each case the highest degree part of the $j$-th ideal generator in the $x$
variables is $e_j(x_1,\dotsc,x_n)$, any polynomial may be reduced modulo the ideal until its leading term in the
$x$ variables is $x^\gamma$ where $\gamma=(\gamma_1,\dotsc,\gamma_n)\in\Z_{\ge0}^n$
with $\gamma_i \le n-i$ for $1\le i\le n$. But these are the leading terms of the
various kinds of Schubert polynomials.
\end{proof}

\subsection{Geometric bases}
Under the isomorphism \eqref{E:H} (resp. \eqref{E:HT}, \eqref{E:QH})
the Schubert basis $[X_w]$ (resp. $[X_w]_T$, $\sigma^w$)
corresponds to $\Schub_w(x)$ (resp. $\Schub_w(x;a)$, $\qSchub_w(x)$),
by \cite{LS,BGG} for $H^*(\Fl)$, \cite{Bi} for $H^T(\Fl)$, and \cite{FGP} for $QH^*(\Fl)$.  Our first main result is:

\begin{theorem} \label{T:main}
Under the $S[q]$-algebra isomorphism \eqref{E:QHT} the quantum equivariant Schubert basis element
$\sigma^w_T$ corresponds to the quantum double Schubert polynomial $\qSchub_w(x;a)$.
\end{theorem}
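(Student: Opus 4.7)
The strategy is to invoke Mihalcea's characterization (Theorem~\ref{T:QHTchar}): the equivariant quantum Chevalley-Monk formula \eqref{E:EQChev} pins down the pair $(QH^T(\Fl),\{\sigma^w_T\})$ uniquely as an $S[q]$-algebra with distinguished Schubert basis. It therefore suffices to verify that $\{\qSchub_w(x;a):w\in S_n\}$ is an $S[q]$-basis of $S[x;q]/J^{qa}$, and that in this basis the product $\qSchub_{s_i}(x;a)\cdot \qSchub_w(x;a)$ obeys \eqref{E:EQChev} modulo $J^{qa}$. The correspondence $\sigma^w_T\leftrightarrow\qSchub_w(x;a)$ then follows from uniqueness.

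The basis property is exactly Lemma~\ref{L:basisquot}(4). A short divided-difference calculation from \eqref{E:qdSchubdef} identifies the Chevalley generator as $\qSchub_{s_i}(x;a)=\omega_i(x)-\omega_i(a)$, since the higher-degree-in-$x$ terms inside $\det(C_j-a_{n-j}\Id)$ do not survive the divided differences needed to lower $w_0^{(n)}$ to $s_i$. So what remains is the Monk-type identity, modulo $J^{qa}$,
\begin{align*}
  (\omega_i(x)-\omega_i(a))\,\qSchub_w(x;a) &\equiv (w\cdot\omega_i(a)-\omega_i(a))\,\qSchub_w(x;a)\\
  &\quad+\sum_{\alpha\in A_w^n}\ip{\alpha^\vee}{\omega_i}\qSchub_{ws_\alpha}(x;a)+\sum_{\alpha\in B_w^n}q_{\alpha^\vee}\ip{\alpha^\vee}{\omega_i}\qSchub_{ws_\alpha}(x;a).
\end{align*}
The plan to prove this is to interpolate between the two known specializations via the Fomin-Gelfand-Postnikov quantization map $\qt$ (reviewed in \S\ref{SS:qt}). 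Setting $q=0$ recovers the classical equivariant Monk formula for the double Schubert polynomials $\Schub_w(x;a)=\qSchub_w(x;a)\big|_{q=0}$ (using \cite{Bi} plus Lemma~\ref{L:basisquot}(2)), which supplies the $(w\omega_i(a)-\omega_i(a))$-term and the $A_w^n$-sum. Setting $a=0$ recovers the FGP quantum Monk formula for $\qSchub_w(x)$ \cite{FGP}, which supplies the $B_w^n$-sum. Since $\qt$ commutes with the $a$-divided differences $\partial_i^a$, one has $\qSchub_w(x;a)=\qt(\Schub_w(x;a))$, so it is enough to verify the identity for a single universal case (say $w=w_0^{(n)}$, where the LHS is a concrete product of polynomials one can expand by hand) and then apply $\partial^a_w$ to propagate it to all $w\in S_n$.

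\textbf{Main obstacle.} The quantization $\qt$ is not a ring homomorphism, so one cannot simply quantize the classical equivariant Monk relation term by term. The genuine work is to show that when the product $x_j\cdot\qSchub_w(x;a)$ is reduced back into the $\qSchub$-basis modulo $J^{qa}$, the quantum corrections---which arise because the generators $E_k^n$ of $J^{qa}$ differ from $e_k(x_1,\dotsc,x_n)$ by $q$-dependent terms---assemble precisely into the $B_w^n$-sum with coefficients $\ip{\alpha^\vee}{\omega_i}q_{\alpha^\vee}$. At $a=0$ this bookkeeping is carried out in \cite{FGP} (and from related viewpoints in \cite{KM,CF}); the technical core of the present theorem is to confirm that these quantum corrections are unaffected by switching on the $a$-variables, which should follow from the compatibility of $\qt$ with $\partial_i^a$ and the $\Z[a]$-linearity of the reduction modulo $J^{qa}$ in the $x$-leading-term filtration isolated in Lemma~\ref{L:lead}.
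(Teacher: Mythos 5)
Your high-level skeleton is the same as the paper's: invoke Mihalcea's uniqueness (Theorem~\ref{T:QHTchar}), use Lemma~\ref{L:basisquot}(4) for the basis property, and reduce everything to the equivariant quantum Chevalley--Monk identity for the polynomials $\qSchub_w(x;a)$. The computation $\qSchub_{s_i}(x;a)=\omega_i(x)-\omega_i(a)$ is also correct (Lemma~\ref{L:qdsreflection}). But the key technical step is exactly the Monk identity, and your proposal for it has a genuine gap.

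You propose to verify the Monk identity at $w=w_0^{(n)}$ and then ``apply $\partial^a_w$ to propagate it to all $w\in S_n$.'' This is not justified and is not how the paper proceeds. Applying $\partial_j^a$ to both sides of the identity uses the twisted Leibniz rule $\partial_j^a(fg)=(\partial_j^af)g+(s_j^af)(\partial_j^ag)$; on the right-hand side the coefficient $w\cdot\omega_i(a)-\omega_i(a)$ and the index sets $A_w$, $B_w$ all change in nontrivial and interacting ways, and nothing in the proposal shows that the resulting expressions reassemble into the Monk formula for $s_jw$. The ``main obstacle'' paragraph also asserts that the quantum corrections are ``unaffected by switching on the $a$-variables''; that is not accurate as stated (the new equivariant term $w\cdot\omega_i(a)-\omega_i(a)$ manifestly depends on $a$), and in any case it replaces the needed argument by an expectation. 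What the paper actually does (Proposition~\ref{P:QEChevalley}) is prove the Monk identity as a \emph{polynomial} identity in $\Z[x,q,a]$ (with infinitely many variables): starting from the double-Schubert Monk rule (Proposition~\ref{P:ECpoly}), it substitutes $\Schub_{s_i}(x;a)=\Schub_{s_i}(x)-\omega_i(a)$, invokes the Cauchy formula~\eqref{E:dsCauchy}, applies $\qt$, uses the nonequivariant quantum Monk formula (Proposition~\ref{P:Qpoly}), and reduces the remaining quantum-correction identity~\eqref{E:corrections} to a length-additivity bijection $(v,\alpha)\mapsto(vs_\alpha,\alpha)$ between two explicit sets. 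None of this is replaced by a base-case-plus-$\partial^a$ induction.

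Two further pieces are missing. First, you work modulo $J^{qa}$ from the outset, but the paper must first establish (Section~\ref{S:proofmain}) that the ideal $J^{qa}_\infty$ is spanned by $\qSchub_w(x;a)$ with $w\notin S_n$ (together with $q_i$, $a_i$ in high index), via the FGP straightening relations~(3.2) and~(3.6) and a dimension count; this is what lets the infinite-rank polynomial identity descend correctly to $QH^T(\Fl_n)$ with index sets $A_w^n,B_w^n$. Second, Mihalcea's theorem only pins down $QH^T(\Fl_n)$ and its Schubert basis up to a $\Z[q,a]$-algebra isomorphism; to conclude that this abstract isomorphism is the geometric one~\eqref{E:QHT}, the paper observes that the divisor classes $\sigma^{s_i}_T$ are represented by $\Schub_{s_i}(x;a)=\qSchub_{s_i}(x;a)$ in Kim's presentation and that they generate the ring, forcing the two isomorphisms to coincide. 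Your proposal omits this final identification.
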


Theorem \ref{T:main} is proved in Section \ref{S:proofmain}.

\subsection{Stable quantization}
\label{SS:qt}

This section follows \cite{FGP}. 
Let $e_i^r = e_i(x_1,x_2,\dotsc,x_r)\in\Z[x]$ be the elementary symmetric polynomial for integers $0\le i\le r$.
By \cite[Prop. 3.3]{FGP}, $\Z[x]$ has a $\Z$-basis of \textit{standard monomials} $e_I=\prod_{r\ge1} e_{i_r}^r$ 
where $I=(i_1,i_2,\dotsc)$ is a sequence of nonnegative integers, almost all zero,
with $0\le i_r \le r$ for all $r\ge 1$. 

The stable quantization map is the $\Z[q]$-module automorphism $\qt$ of $\Z[x;q]$ given by
\begin{align*}
  e_I \mapsto E_I := \prod_{j\ge1} E_{i_j}^j.
\end{align*}
By \cite{FGP,CF,KM} we have (this is the definition of quantum Schubert polynomial in \cite{FGP})
\begin{align}\label{E:qtSchub}
  \qt(\Schub_w(x)) = \qSchub_w(x)\qquad\text{for all $w\in S_\infty$.}
\end{align}
The map $\qt$ is extended by $\Z[a]$-linearity to a $\Z[q,a]$-module automorphism
of $\Z[x,q,a]$.

\subsection{Cauchy formulae} 
The double Schubert polynomials satisfy \cite{Mac}
\begin{align}\label{E:dsCauchy}
  \Schub_w(x;a) = \sum_{v\wleq w} \Schub_{vw^{-1}}(-a) \Schub_v(x)
\end{align}
where $v\wleq w$ denotes the left weak order, defined by $\ell(wv^{-1})+\ell(v)=\ell(w)$.
For a geometric explanation of this identity see \cite{A}. We have \cite{KM,CF}
\begin{align}\label{E:quantCauchy}
  \qt(\Schub_w(x;a)) &= \qSchub_w(x;a) = \sum_{v\wleq w} \Schub_{vw^{-1}}(-a) \qSchub_v(x).
\end{align}
The first equality follows from the divided difference definitions of $\qSchub_w(x;a)$
and $\Schub_w(x;a)$ and the commutation of the divided differences in the $a$-variables with quantization.
The second equality follows from quantizing \eqref{E:dsCauchy}.

We require the explicit formulae for Schubert polynomials indexed by simple reflections.

\begin{lem}\label{L:qdsreflection} We have
\begin{align}
\label{E:Schubreflect}
  \Schub_{s_i}(x) &= \omega_i(x) = x_1+x_2+\dotsm+x_i \\
\label{E:qSchubreflect}
  \qSchub_{s_i}(x) &= \Schub_{s_i}(x) \\
\label{E:qdSchubreflect}
  \qSchub_{s_i}(x;a) &= \omega_i(x) - \omega_i(a).
\end{align}
\end{lem}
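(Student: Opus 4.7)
The plan is to prove the three identities in order, with each subsequent one following formally from the preceding one via general facts already recorded in the paper.

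For \eqref{E:Schubreflect}, my approach is to combine the leading-term information with symmetry. Since $\ell(s_i) = 1$, the polynomial $\Schub_{s_i}(x)$ is homogeneous of degree one, and by Lemma \ref{L:lead} its reverse-lex leading monomial is $x^{\code(s_i)} = x_i$, with coefficient one. Two classical features of $\Schub_w(x)$ apply (valid for the paper's convention by the equivalence noted in the footnote to \eqref{E:qdSchubdef}): $\Schub_{s_i}(x)$ involves only $x_1,\dotsc,x_i$, and it is symmetric in these variables, since $s_i$ has its unique descent at position $i$. Thus $\Schub_{s_i}(x)$ is a scalar multiple of $\omega_i(x) = x_1+\dotsm+x_i$, and the leading-term coefficient pins the scalar at $1$. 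A self-contained alternative is to fix $n = i+1$ (using stability) and expand $\partial^a_{s_iw_0^{(n)}}\prod(x_j-a_{n-i})$ directly, which I have checked in the case $i=2$ produces $\omega_i(x)-\omega_i(a)$; setting $a=0$ then yields \eqref{E:Schubreflect}.

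For \eqref{E:qSchubreflect}, apply the quantization map $\qt$ to \eqref{E:Schubreflect}. By \eqref{E:qtSchub}, the left-hand side becomes $\qSchub_{s_i}(x)$. On the right, $\omega_i(x) = e_1^i$ is already a standard monomial, so $\qt$ sends it to $E_1^i$. Reading the coefficient of $(-t)^{i-1}$ in $\det(C_i - t\,\Id) = \sum_j (-t)^{i-j} E_j^i$, and noting that every non-identity permutation contribution to the determinant involves at least two off-diagonal entries and hence has degree at most $i-2$ in $t$, one gets $E_1^i = \tr(C_i) = x_1+\dotsm+x_i = \omega_i(x)$. Hence $\qSchub_{s_i}(x) = \omega_i(x) = \Schub_{s_i}(x)$.

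For \eqref{E:qdSchubreflect}, apply the quantum Cauchy formula \eqref{E:quantCauchy} at $w = s_i$. The condition $v \wleq s_i$, i.e.\ $\ell(s_iv^{-1}) + \ell(v) = 1$, forces $v \in \{e, s_i\}$, leaving
\[
\qSchub_{s_i}(x;a) = \Schub_{s_i}(-a)\,\qSchub_e(x) + \Schub_e(-a)\,\qSchub_{s_i}(x) = -\omega_i(a) + \omega_i(x),
\]
using $\Schub_{s_i}(-a) = \omega_i(-a) = -\omega_i(a)$ together with the previous two identities.

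The main obstacle is the base identity \eqref{E:Schubreflect}; the other two are formal consequences. The only slightly delicate point is that the paper's divided differences act on $a$, whereas support and symmetry of $\Schub_w(x)$ are most transparently phrased using $\partial^x$; one either invokes the equivalence of the two conventions or performs the $\partial^a$ expansion by hand for small $n$ and concludes by stability.
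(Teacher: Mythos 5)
Your argument is correct, and parts \eqref{E:qSchubreflect} and \eqref{E:qdSchubreflect} proceed exactly as in the paper: apply $\qt$ and compute $E_1^i=\omega_i(x)$, then apply \eqref{E:quantCauchy} at $w=s_i$ (the paper simply writes the two-term sum as $\qSchub_{s_i}(x)-\Schub_{s_i}(a)$ without spelling out $v\in\{e,s_i\}$).

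The only genuine difference is in the base identity \eqref{E:Schubreflect}. The paper dispatches it in one line by observing that $s_i$ is an $i$-Grassmannian permutation whose associated partition is a single box, so $\Schub_{s_i}(x)$ is the Schur polynomial $S_1[x_1,\dotsc,x_i]=\omega_i(x)$, citing \cite{Mac}. You instead leverage Lemma \ref{L:lead} (leading monomial $x^{\code(s_i)}=x_i$ with coefficient $1$) together with the classical facts that $\Schub_{s_i}$ is supported on $x_1,\dotsc,x_i$ (last descent at $i$) and is symmetric in those variables (no descent at $j<i$), which pins it down among degree-one polynomials as $1\cdot\omega_i(x)$. Both are legitimate; the paper's route requires recognizing the Grassmannian/Schur specialization, while yours trades that for two other standard Schubert-polynomial facts plus the leading-term lemma already recorded. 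Your secondary alternative (direct $\partial^a$ expansion checked only for $i=2$) is of course not a proof and should be regarded purely as a sanity check, but you correctly flag it as such. One small point worth being explicit about: the support and symmetry facts you quote are usually stated for the $\partial^x$-definition of $\Schub_w$, and you are right that the footnote's identity $\Schub_{w^{-1}}(x;a)=\Schub_w(-a;-x)$ is what transports them to the paper's $\partial^a$-definition; that bridge is needed and you invoke it appropriately.
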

\begin{proof} Since $s_i$ is an $i$-Grassmannian permutation with associated
partition consisting of a single box, its Schubert polynomial
is the Schur polynomial \cite{Mac}
$\Schub_{s_i}(x) = S_1[x_1,\dotsc,x_i] = \omega_i(x)$, proving \eqref{E:Schubreflect}.
For \eqref{E:qSchubreflect} we have $\qSchub_{s_i}(x) = \qt(\Schub_{s_i}(x))=\qt(e_{1,i})=E_{1,i} = \omega_i(x)$. 
For \eqref{E:qdSchubreflect}, by \eqref{E:quantCauchy} we have 
$\qSchub_{s_i}(x;a) = \qSchub_{s_i}(x) - \Schub_{s_i}(a) = \omega_i(x) - \omega_i(a)$
as required.
\end{proof}

\section{Chevalley-Monk rules for Schubert polynomials}
The Chevalley-Monk formula describes the product of a divisor class and an arbitrary Schubert class in the cohomology ring $H^*(\Fl)$.  The goal of this section is to establish the Chevalley-Monk rule for quantum double Schubert polynomials.  The Chevalley-Monk rules for (double, quantum, quantum double) Schubert polynomials should be viewed as product rules for
the cohomologies of an infinite-dimensional flag ind-variety $\Fl_\infty$ of type $A_\infty$ with Dynkin node set $\Z_{>0}$ and simple bonds between $i$ and $i+1$ for all $i\in \Z_{>0}$.

Let $\Phi^+= \{\alpha_{ij}=a_i-a_j\mid 1\le i<j\}$ be the set of positive roots.
Let $\alpha_{ij}^\vee=a_i-a_j$ and $\alpha_i^\vee=\alpha_{i,i+1}^\vee$ by abuse of notation.
Let $s_{ij}=s_{\alpha_{ij}}$ for $1\le i<j$. 
For $w\in S_\infty$ let $A_w$ and $B_w$ be defined as before Theorem \ref{T:QHTchar} but using the infinite set of positive roots $\Phi^+$
and letting $\rho=(0,-1,-2,\dotsc)$. To distinguish between the finite and limiting infinite cases,
we denote by $A_w^n$ the set $A_w$ for $w\in S_n$ which uses the positive roots of $SL_n$.

\subsection{Schubert polynomials}

\begin{prop} \label{P:Chev} \cite{Che,Mo}.
For $w\in S_n$ and $1\le i\le n-1$, in $H^*(\Fl)$ we have
\begin{align*}
  [X_{s_i}] [X_w] = \sum_{\alpha\in A_w^n} \ip{\alpha^\vee}{\omega_i} [X_{ws_\alpha}].
\end{align*}
\end{prop}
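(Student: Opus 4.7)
The plan is to prove the formula as a polynomial identity in $H^*(\Fl_n)=\Z[x_1,\ldots,x_n]/J_n$. Under \eqref{E:H}, $[X_{s_i}]$ corresponds to $\Schub_{s_i}(x)=\omega_i(x)$ by Lemma \ref{L:qdsreflection}, and $[X_w]$ to $\Schub_w(x)$. A direct weight calculation gives $\ip{\alpha_{jk}^\vee}{\omega_i}=1$ exactly when $j\le i<k$ and $0$ otherwise, so the claim is equivalent to the congruence
$$
  \omega_i(x)\,\Schub_w(x) \equiv \sum_{\substack{j\le i<k\le n \\ \ell(ws_{jk})=\ell(w)+1}} \Schub_{ws_{jk}}(x) \pmod{J_n}.
$$

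I would prove this by induction on $\ell(w)$ within the quotient. The base case $w=e$ is immediate: both sides equal $\omega_i(x)$, since the only length-one cover of $e$ by a transposition $s_{jk}$ with $j\le i<k$ is $s_{i,i+1}=s_i$. For the inductive step, select $r\in\{1,\ldots,n-1\}$ with $v:=ws_r\lessdot w$ and apply $\partial_r$ (which descends to $\Z[x]/J_n$ since $J_n$ is $s_r$-stable) to both sides. Combining the Leibniz rule $\partial_r(fg)=(\partial_r f)g+(s_rf)(\partial_r g)$, the defining recursion $\partial_r\Schub_u=\Schub_{us_r}$ if $us_r<u$ and $0$ otherwise, and the elementary identities $\partial_r\omega_i=\delta_{r,i}$ and $s_r\omega_i=\omega_i+\delta_{r,i}(x_{i+1}-x_i)$, one finds that the left-hand side for $w$ is determined, modulo the inductive hypothesis for $v$, by bookkeeping of how covering transpositions of $w$ restrict under right-multiplication by $s_r$ to covering transpositions of $v$, via the exchange condition.

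The residual combinatorial task is to match the covering transpositions $s_{jk}\gtrdot v$ against those $s_{j'k'}\gtrdot w$. When $r\ne i$, the map $s_{jk}\mapsto s_rs_{jk}s_r$ furnishes the needed bijection on the two sides once one accounts for the simple effect of $s_r$ on the indices $\{j,k\}$, so $\partial_r$ applied to both sides of the proposition yields Monk's identity for $v$. The principal obstacle is the boundary case $r=i$, where the extra Leibniz term $\Schub_w+(x_{i+1}-x_i)\Schub_v$ interacts with the transpositions that cross position $i$; verifying that this absorbs into the Monk sum requires a separate case analysis organized by whether the transposition $s_{jk}$ satisfies $k=i+1$, $j=i$, or neither, and reduces to a standard identity on Bruhat covers one level down.
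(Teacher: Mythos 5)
The paper itself gives no proof of Proposition \ref{P:Chev}; it is stated as a citation to Chevalley and Monk, so you are attempting to reprove a classical result from scratch. Your divided-difference strategy is reasonable in outline (it is in the spirit of Macdonald's treatment of Monk's rule), but the inductive step as written has a genuine logical gap. You pick a single descent $r$ of $w$, apply $\partial_r$ to the putative identity for $w$, and invoke the hypothesis for $v=ws_r$. But $\partial_r$ has a nontrivial kernel, namely the $s_r$-invariants, so equality of the two $\partial_r$-images — which is all the $v$-hypothesis can give you — only shows that the discrepancy $\omega_i\Schub_w - \sum_{\alpha\in A_w^n}\ip{\alpha^\vee}{\omega_i}\Schub_{ws_\alpha}$ is $s_r$-invariant in $\Z[x]/J_n$. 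It does not "determine the left-hand side for $w$" as you claim. To conclude the discrepancy vanishes you would need it killed by $\partial_r$ for \emph{every} $r\in\{1,\ldots,n-1\}$, so that it is $S_n$-invariant and therefore a constant in $\Z[x]/J_n$, hence zero since it is homogeneous of positive degree $\ell(w)+1$; but for $r$ an ascent of $w$ your upward induction gives no hypothesis, and you would have to verify $\partial_r$ of the right-hand side equals $\delta_{ri}\Schub_w$ by an independent combinatorial argument.

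A clean repair is to run the induction downward from $w_0$: the base case $\omega_i\Schub_{w_0}\equiv 0\pmod{J_n}$ holds for degree reasons and matches $A_{w_0}^n=\emptyset$, and then applying $\partial_r$ (with $r$ an ascent of $w$) to the \emph{known} equality for $ws_r$ produces an equality for $w$ directly, with no kernel ambiguity. Alternatively, deduce Proposition \ref{P:Chev} from the polynomial Monk rule (Proposition \ref{P:poly}) by reducing mod $J_n$ and using that the stray terms $\Schub_{ws_\alpha}$ with $ws_\alpha\in S_\infty\setminus S_n$ lie in $J_n$; this is precisely the subtlety the paper's Example after Proposition \ref{P:poly} is flagging, and your restriction of $k\le n$ from the outset conceals rather than addresses it. Finally, the "residual combinatorial task" and the $r=i$ boundary case are the actual content of the argument, and your sketch leaves them essentially unexamined.
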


\begin{prop}\label{P:poly} \cite{Mac} 
For $w\in S_\infty$ and $i\in \Z_{>0}$ the Schubert polynomials satisfy the identity in $\Z[x]$ given by
\begin{align}\label{E:poly}
  \Schub_{s_i}(x) \Schub_w(x) &= \sum_{\alpha\in A_w} \ip{\alpha^\vee}{\omega_i} \Schub_{ws_\alpha}(x).
\end{align}
\end{prop}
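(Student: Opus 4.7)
The plan is to lift the geometric Chevalley-Monk rule (Proposition \ref{P:Chev}) from $H^*(\Fl_N)$ to a polynomial identity in $\Z[x]$, using that Schubert polynomials form a basis modulo $J_N$ (Lemma \ref{L:basisquot}(1)). Fix $w \in S_\infty$, say $w \in S_M$, and fix $i \ge 1$. Since $\ip{\alpha_{jk}^\vee}{\omega_i}$ vanishes unless $j \le i < k$, only covers $\alpha_{jk}$ with $j \le i$ contribute. The key combinatorial claim is that every such $\alpha \in A_w$ satisfies $k \le \max(M+1,i+1)$: if $j \le M$ and $k \ge M+2$, then the position $l=k-1 > M$ lies in $(j,k)$ and $w(l)=l \in (w(j), w(k)) = (w(j), k)$ (since $w(j) \le M \le k-2$), contradicting the cover condition; and if $j > M$, then a cover in $S_\infty$ forces $k = j+1$, and non-vanishing of $\ip{\alpha_{j,j+1}^\vee}{\omega_i}$ then forces $j=i$. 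Hence for $N := \max(M+1,i+1)$, every $\Schub_{ws_\alpha}$ on the right-hand side of \eqref{E:poly} has $ws_\alpha \in S_N$.

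Since the product $\Schub_{s_i}(x)\Schub_w(x)$ is a polynomial of degree $\ell(w)+1$, its Schubert expansion is finite by Lemmas \ref{L:lead} and \ref{L:code}; enlarging $N$ further if necessary, both sides of \eqref{E:poly} lie in the $\Z$-span of $\{\Schub_u : u \in S_N\}$. Now Proposition \ref{P:Chev} applied in $H^*(\Fl_N)$, combined with the isomorphism \eqref{E:H} under which $[X_u]$ corresponds to $\Schub_u(x)$ for $u \in S_N$, gives
\begin{equation*}
  \Schub_{s_i}(x)\, \Schub_w(x) \equiv \sum_{\alpha \in A_w^N} \ip{\alpha^\vee}{\omega_i} \Schub_{ws_\alpha}(x) \pmod{J_N}.
\end{equation*}
The combinatorial step shows that the roots $\alpha \in A_w^N$ with $\ip{\alpha^\vee}{\omega_i} \ne 0$ are exactly the roots $\alpha \in A_w$ with $\ip{\alpha^\vee}{\omega_i} \ne 0$ (note that Bruhat covers in $S_N$ agree with those in $S_\infty$ when both indices are $\le N$), so the above right-hand side is congruent modulo $J_N$ to the right-hand side of \eqref{E:poly}.

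The difference of the two sides of \eqref{E:poly} is therefore a $\Z$-linear combination of $\{\Schub_u : u \in S_N\}$ lying in $J_N$, so Lemma \ref{L:basisquot}(1) forces it to vanish, proving the identity. The main obstacle is the combinatorial case analysis bounding the upper index $k$ for contributing covers: without it one cannot fit the entire polynomial identity into a single $\Z[x_1,\dots,x_N]/J_N$ and match the finite sum $A_w^N$ with the a priori infinite set $A_w$. Once that bound is in place, the lifting from a congruence modulo $J_N$ to a polynomial identity is an immediate consequence of the basis property.
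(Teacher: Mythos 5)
Your proof is correct. The paper does not prove Proposition~\ref{P:poly} itself (it cites \cite{Mac}), but the strategy you follow---apply the geometric Chevalley--Monk rule (Proposition~\ref{P:Chev}) in $H^*(\Fl_N)$ for $N$ large, then use Lemma~\ref{L:basisquot} to upgrade the congruence modulo $J_N$ to an identity in $\Z[x]$---is precisely the technique the paper uses one subsection later to prove the analogous equivariant identity, Proposition~\ref{P:ECpoly}. Your explicit bound $k\le\max(M+1,i+1)$ on contributing covers $\alpha_{jk}$ is in fact a more careful version of the step the paper's proof of Proposition~\ref{P:ECpoly} dispatches with ``We observe that the set $A_w$ is finite''; taken literally that assertion is false in $S_\infty$ (for $w\in S_M$ one has $\alpha_{j,j+1}\in A_w$ for every $j>M$), and what you prove is the correct effective statement, namely that $\{\alpha\in A_w:\ip{\alpha^\vee}{\omega_i}\neq 0\}$ is finite, together with an explicit bound. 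In short: same method as the paper's treatment of the equivariant analogue, executed with a sharper finiteness argument.
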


\begin{example} It is necessary to take a large-rank limit $(n \gg 0)$ to compare Propositions \ref{P:Chev} and \ref{P:poly}.
Let $n=2$. We have $[X_{s_1}]^2 = 0$ in $H^*(\Fl_2)$ since $A_{s_1}=\emptyset$ for $SL_2$.
Lifting to polynomials we have $\Schub_{s_1}^2 = x_1^2 = \Schub_{s_2s_1}$
since $A_{s_1}=\{\alpha_{13}\}$, which is not a positive root for $SL_2$.
Note that $\Schub_{s_2s_1}\in J_2$ and $s_2s_1\in S_3\setminus S_2$. In $H^*(\Fl_n)$ for $n\ge3$ we have
$[X_{s_1}]^2 = [X_{s_2s_1}]$. 
\end{example}

\subsection{Quantum Schubert polynomials}

\begin{prop} \label{P:qChev} \cite{FGP} For $w\in S_n$ and $1\le i\le n-1$, in $QH^*(\Fl_n)$ we have
\begin{align*}
  \sigma^{s_i}\sigma^w = \sum_{\alpha\in A_w^n} \ip{\alpha^\vee}{\omega_i} \sigma^{ws_\alpha} +
  \sum_{\alpha\in B_w^n} q_{\alpha^\vee} \ip{\alpha^\vee}{\omega_i} \sigma^{ws_\alpha}.
\end{align*}
\end{prop}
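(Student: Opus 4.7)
My plan is to lift Proposition \ref{P:poly} from $\Z[x]$ to the ring $\Z[x;q]/J_n^q$ via the stable quantization map $\qt$ of \S\ref{SS:qt}. Since $\qSchub_{s_i}(x) = \omega_i(x)$ by Lemma \ref{L:qdsreflection}, and since $QH^*(\Fl_n) \cong \Z[x;q]/J_n^q$ carries a Schubert basis $\{\qSchub_v(x) \mid v \in S_n\}$ by Lemma \ref{L:basisquot}(3), the task is to expand $\omega_i(x) \cdot \qSchub_w(x)$ in this basis and match the resulting coefficients with the combinatorial data $A_w^n$ and $B_w^n$.

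First I apply $\qt$ to the infinite-rank polynomial identity \eqref{E:poly}. By $\Z$-linearity of $\qt$ and \eqref{E:qtSchub}, the right-hand side becomes $\sum_{\alpha\in A_w} \ip{\alpha^\vee}{\omega_i}\, \qSchub_{ws_\alpha}(x)$, summed over all positive roots of the infinite-rank root system. The left-hand side is $\qt(\omega_i \cdot \Schub_w)$, which is not equal to $\omega_i \cdot \qSchub_w$ in $\Z[x;q]$ because $\qt$ is not a ring homomorphism. The key algebraic lemma---essentially the engine of \cite{FGP}---is that these two expressions nevertheless coincide modulo $J_n^q$:
\[
  \qt(\omega_i \cdot \Schub_w) \equiv \omega_i \cdot \qSchub_w \pmod{J_n^q}.
\]
This I would prove by expanding $\Schub_w$ in the standard monomial basis $\{e_I\}$, applying $\qt$ to pass to $\{E_I\}$, and checking that the discrepancy between multiplying by $\omega_i$ before and after $\qt$ lies in the ideal generated by the $E_j^n$.

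It then remains to split the sum over $\alpha \in A_w$ according to whether $ws_\alpha$ lies in $S_n$. Terms with $ws_\alpha \in S_n$---equivalently $\alpha \in A_w^n$---contribute directly to the first sum in the formula. For $\alpha \in A_w \setminus A_w^n$, the permutation $ws_\alpha$ lies outside $S_n$, and I must reduce $\qSchub_{ws_\alpha}(x)$ modulo $J_n^q$. Using the code bijection of Lemma \ref{L:code} and the explicit form of the $E_j^n$, each such term should either vanish modulo $J_n^q$ or reduce to a multiple $q_\beta\, \qSchub_v(x)$ whose contribution matches precisely one of the quantum terms indexed by $B_w^n$.

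The principal obstacle is the quantization-modulo-ideal identity in the second paragraph, which requires carefully tracking how the classical relations $e_j(x_1,\dotsc,x_n) \equiv 0$ in $J_n$ deform to the quantum relations $E_j^n \equiv 0$ in $J_n^q$, and their compatibility with multiplication by $\omega_i(x)$. Once this is established, the combinatorial bookkeeping of the third paragraph---matching cover relations in $S_\infty$ that ``exit'' $S_n$ with the quantum cover set $B_w^n$---should follow from the stability of $\qSchub$ and standard facts about the code of a permutation.
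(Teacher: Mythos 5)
The paper does not prove Proposition~\ref{P:qChev}; it is quoted directly from \cite{FGP}, as is the companion polynomial identity Proposition~\ref{P:Qpoly}. So there is no internal proof to compare against, and the only question is whether your argument is sound on its own terms. It is not: the ``key algebraic lemma'' you isolate in the second paragraph is false, and the overall strategy of quantizing the classical Monk identity cannot produce the quantum correction terms.

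Concretely, take $n=2$, $w=s_1$, $i=1$. Then $\omega_1(x)\,\Schub_{s_1}(x)=x_1^2$. Writing $x_1^2 = e_1^2 e_1^1 - e_2^2$ in standard monomials and applying $\qt$ gives $\qt(x_1^2)=E_1^2E_1^1-E_2^2=(x_1+x_2)x_1-(x_1x_2+q_1)=x_1^2-q_1$. On the other hand $\omega_1(x)\,\qSchub_{s_1}(x)=x_1\cdot x_1=x_1^2$. The difference is $q_1$, which is \emph{not} in $J_2^q=(x_1+x_2,\ x_1x_2+q_1)$: reducing modulo $J_2^q$ one finds $x_1^2\equiv q_1$, and $q_1\cdot 1$ is a nonzero element of the $\Z[q_1]$-basis $\{1,x_1\}$ of $\Z[x_1,x_2;q_1]/J_2^q$. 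So
\[
\qt\bigl(\omega_1(x)\,\Schub_{s_1}(x)\bigr)\ \not\equiv\ \omega_1(x)\,\qSchub_{s_1}(x) \pmod{J_2^q}.
\]
This kills the engine of the argument. In fact, the example shows what goes wrong globally: applying $\qt$ to the classical identity $\Schub_{s_1}(x)\Schub_{s_1}(x)=\Schub_{s_2s_1}(x)$ yields $\qt(x_1^2)=\qSchub_{s_2s_1}(x)=x_1^2-q_1$, which reduces to $0\equiv 0$ modulo $J_2^q$ --- a tautology. Meanwhile the statement you actually want, $\sigma^{s_1}\sigma^{s_1}=q_1$, requires producing the term $q_1\,\qSchub_e(x)$, and this term simply does not appear after quantizing the classical Chevalley--Monk formula. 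The quantum terms indexed by $B_w$ in Proposition~\ref{P:Qpoly} are genuinely new: they measure exactly the failure of $\qt$ to commute with multiplication by $\omega_i(x)$, rather than something that vanishes modulo $J_n^q$. Thus the third paragraph's plan of recovering the $B_w^n$ sum by reducing $\qSchub_{ws_\alpha}(x)$ modulo $J_n^q$ for $ws_\alpha\notin S_n$ also cannot work --- in the example above, $\qSchub_{s_2s_1}(x)$ reduces to $0$, not to $q_1$.

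If you want a polynomial route to Proposition~\ref{P:qChev}, the correct input is Proposition~\ref{P:Qpoly} itself --- a bona fide identity in $\Z[x;q]$, not a congruence obtained by quantizing Proposition~\ref{P:poly} --- together with the fact (also from \cite{FGP}, stated just before Theorem~\ref{T:main}) that $\sigma^w\mapsto\qSchub_w(x)+J_n^q$ under \eqref{E:QH}. Proposition~\ref{P:Qpoly} must be established directly, e.g.\ by the straightening computations with the relations among the $E_j^p$ that \cite{FGP} carry out; it does not follow from the non-quantum Monk rule by applying $\qt$.
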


\begin{prop} \label{P:Qpoly} \cite{FGP}
For $i\in \Z_{>0}$ and $w\in S_\infty$ the quantum Schubert polynomials satisfy the identity in $\Z[x;q]$ given by
\begin{align}\label{E:Qpoly}
  \qSchub_{s_i}(x) \qSchub_w(x) &= \sum_{\alpha\in A_w} \ip{\alpha^\vee}{\omega_i} \qSchub_{ws_\alpha}(x) +
  \sum_{\alpha\in B_w} q_{\alpha^\vee} \ip{\alpha^\vee}{\omega_i} \qSchub_{ws_\alpha}(x).
\end{align}
\end{prop}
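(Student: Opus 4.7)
The plan is to derive \eqref{E:Qpoly} from the classical polynomial Chevalley--Monk identity \eqref{E:poly} by applying the stable quantization map $\qt$ of \S\ref{SS:qt}, following \cite{FGP}. Since $\qt$ is $\Z$-linear and $\qt(\Schub_u)=\qSchub_u$ by \eqref{E:qtSchub}, applying $\qt$ to \eqref{E:poly} yields
\begin{equation*}
\qt\bigl(\omega_i(x)\cdot\Schub_w(x)\bigr) \;=\; \sum_{\alpha\in A_w}\ip{\alpha^\vee}{\omega_i}\,\qSchub_{ws_\alpha}(x),
\end{equation*}
using $\Schub_{s_i}=\omega_i(x)$. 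Combined with $\qSchub_{s_i}(x)=\omega_i(x)$ from Lemma~\ref{L:qdsreflection}, establishing \eqref{E:Qpoly} is equivalent to the commutation defect formula
\begin{equation*}
\omega_i(x)\cdot\qSchub_w(x) \;-\; \qt\bigl(\omega_i(x)\cdot\Schub_w(x)\bigr) \;=\; \sum_{\alpha\in B_w} q_{\alpha^\vee}\ip{\alpha^\vee}{\omega_i}\,\qSchub_{ws_\alpha}(x),
\end{equation*}
which measures exactly how far $\qt$ fails to commute with multiplication by $\omega_i = e_1^i$.

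To compute the left-hand side, I would expand $\Schub_w=\sum_I c_I\,e_I$ in the standard monomial basis so that $\qSchub_w=\sum_I c_I\,E_I$, and compare the effect of multiplying each $E_I$ by $\omega_i$ in $\Z[x;q]$ with that of multiplying $e_I$ by $\omega_i$ in $\Z[x]$, straightening into standard form, and then applying $\qt$. The $q$-corrections come from the discrepancy between the classical recursion $e_j^r = e_j^{r-1} + x_r\,e_{j-1}^{r-1}$ and the quantum recursion
\begin{equation*}
E_j^r \;=\; E_j^{r-1} + x_r\,E_{j-1}^{r-1} + q_{r-1}\,E_{j-2}^{r-2},
\end{equation*}
obtained by Laplace expansion of the tridiagonal determinant defining $E_j^r$ along its last row. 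Every firing of the extra summand $q_{r-1}E_{j-2}^{r-2}$ during straightening of $\omega_i\cdot\qSchub_w$ produces one $q$-term in the defect.

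The main obstacle is matching these accumulated $q$-corrections with the $B_w$-sum: one must show combinatorially that the length condition $\ell(ws_\alpha)=\ell(w)+1-\ip{\alpha^\vee}{2\rho}$ defining $B_w$ correctly counts the quantum recursion firings contributing the monomial $x^{\code(ws_\alpha)}$. As an independent consistency check (and a possible alternative route), Proposition~\ref{P:qChev} together with Lemma~\ref{L:basisquot}(3) gives the identity \eqref{E:Qpoly} modulo $J_n^q$ for $n\gg\ell(w)$; a degree/leading-term analysis using Lemma~\ref{L:lead} can in principle promote the modular identity to a polynomial one, since LHS $-$ RHS is homogeneous of total degree $\ell(w)+1$ and, for $n$ large, every $x$-monomial $x^\gamma$ appearing satisfies $\gamma_i\le n-i$, so that the difference becomes a $\Z[q]$-combination of a standard basis of the quantum Borel quotient lying in $J_n^q$, and must vanish identically.
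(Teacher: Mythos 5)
The paper does not prove Proposition~\ref{P:Qpoly}; it is cited from \cite{FGP}. Your first route---computing the defect by which $\qt$ fails to commute with multiplication by $\omega_i=e_1^i$, by tracking firings of the extra $q_{r-1}E_{j-2}^{r-2}$ term in the quantum Laplace-expansion recursion---is indeed the direction \cite{FGP} take, but you stop exactly where the work begins. The ``main obstacle'' you name is not a technicality to be handled separately: it \emph{is} the quantum Monk rule. Your proposal reduces \eqref{E:Qpoly} to the equivalent assertion that the accumulated $q$-corrections match the $B_w$-sum, without establishing it; so as written the first route is a reformulation, not a proof.

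Your ``alternative route,'' on the other hand, is essentially a complete argument and closely parallels the paper's own proof of Proposition~\ref{P:ECpoly}: use the geometric rule (here Proposition~\ref{P:qChev}) in $\Fl_n$ for $n$ large, the representability statement $\sigma^w\leftrightarrow \qSchub_w(x)+J^q_n$ under \eqref{E:QH}, and the basis/leading-term Lemmas~\ref{L:lead}, \ref{L:code}, \ref{L:basisquot}(3) together with the stability trick (enlarge $n$ a second time so that all $v$ and all $q_j$ appearing in the $\Z[q]$-expansion of the difference lie in range, as in the proof of Proposition~\ref{P:ECpoly}) to lift the congruence mod $J^q_n$ to an identity in $\Z[x;q]$. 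The caveat you must flag is circularity: the representability $\sigma^w\leftrightarrow\qSchub_w$ is proved in \cite{FGP} \emph{using} the algebraic quantum Monk rule, so invoking \cite{FGP}'s representability to prove Proposition~\ref{P:Qpoly} is circular unless you substitute an independent source for representability (e.g.\ Peterson's work or \cite{LamSh:qaf}). This matters here because the paper's proof of Proposition~\ref{P:QEChevalley} explicitly invokes Proposition~\ref{P:Qpoly} (in the step rewriting $\sum_{\alpha\in A_v}\ip{\alpha^\vee}{\omega_i}\qSchub_{vs_\alpha}(x)$ as $\qSchub_{s_i}(x)\qSchub_v(x)-\sum_{\alpha\in B_v}\ip{\alpha^\vee}{\omega_i}q_{\alpha^\vee}\qSchub_{vs_\alpha}(x)$), so within this paper the proposition is genuinely an external input, not something re-derivable from downstream results.
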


\subsection{Double Schubert polynomials}

\begin{prop}\label{P:EChev} \cite{KK} \cite{Rob}. For $w\in S_n$
and $1\le i\le n-1$, in $H^T(\Fl_n)$ we have
\begin{align}
  [X_{s_i}]_T [X_w]_T = (-\omega_i(a) + w\cdot \omega_i(a)) [X_w]_T +\sum_{\alpha\in A_w^n} \ip{\alpha^\vee}{\omega_i} [X_{ws_\alpha}]_T.
\end{align}
\end{prop}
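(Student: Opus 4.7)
My plan is to lift the classical Chevalley-Monk formula (Proposition \ref{P:Chev}) to the equivariant setting by working in the polynomial presentation \eqref{E:HT} and then checking the identity after restriction to the $T$-fixed points. By \cite{Bi} the class $[X_w]_T$ is represented by $\Schub_w(x;a)$, and by Lemma \ref{L:qdsreflection} we have $\Schub_{s_i}(x;a) = \omega_i(x)-\omega_i(a)$. The assertion is therefore equivalent to the polynomial congruence
\begin{align*}
\omega_i(x)\,\Schub_w(x;a) \equiv (w\cdot\omega_i(a))\,\Schub_w(x;a) + \sum_{\alpha\in A_w^n}\ip{\alpha^\vee}{\omega_i}\,\Schub_{ws_\alpha}(x;a) \pmod{J^a}.
\end{align*}

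To verify this I would apply GKM localization. The restriction $H^T(\Fl_n)\hookrightarrow\bigoplus_{v\in S_n} S$ to the $T$-fixed points is injective and, under \eqref{E:HT}, is given by the collection of evaluations $\mathrm{ev}_v: x_j\mapsto a_{v(j)}$. Under $\mathrm{ev}_v$ the polynomial $\omega_i(x)$ becomes $v\cdot\omega_i(a)$, so it suffices to prove, for each $v\in S_n$,
\begin{align*}
(v\cdot\omega_i(a)-w\cdot\omega_i(a))\,\Schub_w(v\cdot a;a) = \sum_{\alpha\in A_w^n}\ip{\alpha^\vee}{\omega_i}\,\Schub_{ws_\alpha}(v\cdot a;a).
\end{align*}
By the Andersen--Jantzen--Soergel/Billey formula, $\Schub_w(v\cdot a;a)$ vanishes unless $v\ge w$ in Bruhat order, so the cases $v\not\ge w$ (both sides zero) and $v=w$ (left side zero, and each summand vanishes since $ws_\alpha>w$) are immediate.

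The substantive case is $v>w$, which I would treat by induction on $\ell(v)-\ell(w)$, reducing to the covering case $v=ws_\beta$ where at most one term on the right survives and the equality becomes an elementary root-system identity. The main obstacle is this last combinatorial step: matching the Billey expansions of $\Schub_w(v\cdot a;a)$ and $\Schub_{ws_\alpha}(v\cdot a;a)$ so that the weighted sum over $\alpha\in A_w^n$ telescopes precisely to $(v\cdot\omega_i(a)-w\cdot\omega_i(a))\Schub_w(v\cdot a;a)$. An alternative that avoids this is induction on $\ell(w)$ using the Leibniz rule $\partial_j^a(FG)=(\partial_j^a F)G+(s_j^a F)(\partial_j^a G)$ together with the standard recursion relating $\Schub_w$ to $\Schub_{ws_j}$ under divided differences; the bookkeeping there concerns how $A_w^n$ transforms under right multiplication by $s_j$ and how $w\cdot\omega_i(a)$ picks up $\alpha_j$-corrections under $s_j^a$, which is the technical heart of the proof in \cite{KK,Rob}.
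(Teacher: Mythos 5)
The paper provides no proof of Proposition \ref{P:EChev}; it is quoted from Kostant--Kumar \cite{KK} and Robinson \cite{Rob}, so there is nothing in-text to compare your argument against. Your reduction is sound as far as it goes: passing to the Borel presentation via \cite{Bi}, using $\Schub_{s_i}(x;a)=\omega_i(x)-\omega_i(a)$ from Lemma \ref{L:qdsreflection}, and noting that restriction to the fixed point $v$ sends $x_j\mapsto a_{v(j)}$, together reduce the claim to the scalar identity you wrote, and your disposal of the cases $v\not\ge w$ and $v=w$ via vanishing of the Billey restrictions is correct.

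The genuine gap is the case $v>w$, and the repair you sketch does not close it: the scalar identity for a given $v$ is a stand-alone statement, and there is no visible mechanism by which a general $v$ reduces to the covering case $v=ws_\beta$. When $\ell(v)-\ell(w)\ge 2$, several $\alpha\in A_w^n$ contribute simultaneously and the Billey sums do not telescope in any elementary way. A cleaner route in the same spirit is to abandon pointwise verification in favor of a degree and support argument: write $[X_{s_i}]_T[X_w]_T=\sum_u c_u[X_u]_T$ with $c_u\in S$ homogeneous of degree $\ell(w)+1-\ell(u)$; restricting to a Bruhat-minimal $u$ with $c_u\ne0$ shows $u\ge w$, so only $u=w$ (degree one) and $u\gtrdot w$ (degree zero) can occur; the degree-zero coefficients are recovered from Proposition \ref{P:Chev} by setting $a=0$, and $c_w$ is found by restricting both sides to the fixed point $w$, giving $c_w=[X_{s_i}]_T|_w=w\cdot\omega_i(a)-\omega_i(a)$. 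Your second suggestion, inducting on $\ell(w)$ via the Leibniz rule for divided differences, is essentially the nil-Hecke argument of \cite{KK} and does work, but it demands more careful bookkeeping than the support argument above.
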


The following is surely known but we include a proof for lack of a known reference.

\begin{prop}\label{P:ECpoly} For $w\in S_\infty$ and $i\in\Z_{>0}$, the double Schubert polynomials satisfy the identity in $\Z[x,a]$ given by
\begin{align}\label{E:ECpoly}
  \Schub_{s_i}(x;a) \Schub_w(x;a) = (-\omega_i(a)+w\cdot \omega_i(a)) \Schub_w(x;a)+
  \sum_{\alpha\in A_w} \ip{\alpha^\vee}{\omega_i} \Schub_{ws_\alpha}(x;a).
\end{align}
\end{prop}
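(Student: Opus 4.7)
The plan is to derive Proposition \ref{P:ECpoly} from the finite equivariant Chevalley--Monk formula (Proposition \ref{P:EChev}) by reconciling the stable sum over $A_w$ with the finite-rank sum over $A_w^n$ and then lifting the resulting congruence to a polynomial identity. Fix $w \in S_\infty$ and $i \in \Z_{>0}$. The first task is to reconcile the (potentially infinite) sum over $A_w$ in \eqref{E:ECpoly} with the finite sum over $A_w^n$ in Proposition \ref{P:EChev}. Since $\langle \alpha_{jk}^\vee, \omega_i \rangle = 1$ if $j \le i < k$ and $0$ otherwise, any root $\alpha_{jk} \in A_w$ with $j > i$ contributes zero. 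For $w \in S_m$, a routine analysis of cover relations in $S_\infty$ shows that only finitely many $\alpha_{jk} \in A_w$ have $j \le i$, so the sum in \eqref{E:ECpoly} is effectively finite; for $n$ large enough (so that $w \in S_n$, $i < n$, and every such surviving root lies in $A_w^n$ with $ws_\alpha \in S_n$), the sums over $A_w$ and $A_w^n$ agree as polynomials in $R_n := \Z[x_1,\dots,x_n,a_1,\dots,a_n]$.

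Next, I invoke the polynomial representation of equivariant Schubert classes by double Schubert polynomials (cited from Billey \cite{Bi} in the discussion preceding Theorem \ref{T:main}): under the isomorphism \eqref{E:HT}, the class $[X_v]_T$ corresponds to $\Schub_v(x;a)$. Applying this to Proposition \ref{P:EChev} and using the reconciliation above shows that the difference $D$ between the two sides of \eqref{E:ECpoly} lies in the ideal $J_n^a$ for all sufficiently large $n$.

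To upgrade this congruence to an equality in $\Z[x,a]$, I exploit that by the leading-term Lemma \ref{L:lead} the set $\{\Schub_u(x;a) : u \in S_\infty\}$ is a $\Z[a]$-basis of the stable polynomial ring $\Z[a][x_1,x_2,\dots]$, so $D$ has a unique finite expansion $D = \sum_u c_u(a)\,\Schub_u(x;a)$. Choosing $n$ large enough so that additionally every $u$ with $c_u(a) \ne 0$ lies in $S_n$, the expansion of $D$ uses only basis elements of $R_n / J_n^a$ by Lemma \ref{L:basisquot}(2). Since $D \in J_n^a$, linear independence forces each $c_u(a) = 0$, whence $D = 0$.

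The main obstacle is the bookkeeping in the first step: one needs to correctly identify which roots of the infinite $A_w$ survive the Chevalley--Monk pairing and to verify that those roots are all captured by $A_w^n$ for $n$ large. Once this is handled, the translation from $H^T(\Fl_n)$ to polynomials via Billey's theorem and the lift across the Schubert basis are essentially formal. An alternative more direct approach expanding both sides via the Cauchy formula \eqref{E:dsCauchy} and the nonequivariant Chevalley--Monk rule (Proposition \ref{P:poly}) reduces the statement to a rather intricate identity among Schubert polynomials evaluated at $-a$ involving the left weak order and cover relations, and seems less clean than the stability argument sketched here.
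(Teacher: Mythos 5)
Your proof follows essentially the same route as the paper's: use Billey's theorem and the finite-rank equivariant Chevalley--Monk formula (Proposition \ref{P:EChev}) to see that the identity holds modulo $J_n^a$ for all large $n$, then conclude via the double Schubert polynomial basis (Lemma \ref{L:basisquot}). In fact your treatment of finiteness is slightly more careful than the paper's, which opens by asserting that $A_w$ is finite; this is literally false in $S_\infty$ (for $j$ large the simple root $\alpha_{j,j+1}$ lies in $A_w$), but, as you observe, the sum is nonetheless finite because $\ip{\alpha_{jk}^\vee}{\omega_i}=0$ for $j>i$ while only finitely many $\alpha_{jk}\in A_w$ have $j\le i$.
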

\begin{proof} Fix $w \in S_\infty$.  We observe that the set $A_w$ is finite. Let $N$ be large enough so that
all appearing terms make sense for $S_N$. 
By \cite{Bi} under the isomorphism \eqref{E:HT}, $[X_w]_T\mapsto \Schub_w(x;a)+J_N^a$ for $w\in S_N$.
By Proposition \ref{P:EChev} for $H^T(\Fl_N)$, equation \eqref{E:ECpoly} holds modulo an element $f\in J_N^a$.
We may write $f=\sum_{v\in S_\infty} b_v \Schub_v(x;a)$ where $b_v\in \Z[a]$ and only finitely many
are nonzero. Choose $n\ge N$ large enough so that $v\in S_n$ and $b_v\in \Z[a_1,\dotsc,a_n]$
for all $v$ with $b_v\ne 0$. Applying Proposition \ref{P:EChev} again for $H^T(\Fl_n)$ 
we deduce that $f\in J_n^a$. By Lemma \ref{L:basisquot} it follows that $f=0$ as required.
\end{proof}

\subsection{Quantum double Schubert polynomials}

Theorem \ref{T:QHTchar} gives the equivariant quantum Chevalley-Monk rule for $QH^T(\Fl_n)$.
We cannot use the multiplication rule in Theorem \ref{T:QHTchar} directly
because we are trying to prove that the quantum double Schubert polynomials represent Schubert classes.  We deduce the following product formula by cancelling down to the equivariant case which was proven above.

\begin{prop} \label{P:QEChevalley} The quantum double Schubert polynomials satisfy the equivariant quantum Chevalley-Monk
rule in $\Z[x,q,a]$: for all $w\in S_\infty$ and $i\ge1$ we have
\begin{align}\label{E:EQCpoly}
  \qSchub_{s_i}(x;a) \qSchub_w(x;a) &= (-\omega_i(a)+w\cdot\omega_i(a)) \qSchub_w(x;a) +
  \sum_{\alpha\in A_w} \ip{\alpha^\vee}{\omega_i} \qSchub_{ws_\alpha}(x;a) \\
\notag  &+ \sum_{\alpha\in B_w}  q_{\alpha^\vee}  \ip{\alpha^\vee}{\omega_i}\qSchub_{ws_\alpha}(x;a).
\end{align}
\end{prop}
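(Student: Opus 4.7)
The plan is to ``cancel down to the equivariant case'' by combining the quantization map $\qt$ with the Cauchy formulae, reducing the identity to a combinatorial statement about weak order. First, I substitute $\qSchub_{s_i}(x;a) = \omega_i(x) - \omega_i(a)$ from Lemma \ref{L:qdsreflection} into the left-hand side of \eqref{E:EQCpoly} and cancel the common term $-\omega_i(a)\,\qSchub_w(x;a)$ on both sides, reformulating the target as
\[
\omega_i(x)\,\qSchub_w(x;a) = (w\cdot\omega_i(a))\,\qSchub_w(x;a) + \sum_{\alpha\in A_w}\ip{\alpha^\vee}{\omega_i}\,\qSchub_{ws_\alpha}(x;a) + \sum_{\alpha\in B_w} q_{\alpha^\vee}\ip{\alpha^\vee}{\omega_i}\,\qSchub_{ws_\alpha}(x;a).
\]

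Next, I rewrite Proposition \ref{P:ECpoly} in the analogous form $\omega_i(x)\,\Schub_w(x;a) = (w\cdot\omega_i(a))\,\Schub_w(x;a) + \sum_{\alpha\in A_w}\ip{\alpha^\vee}{\omega_i}\,\Schub_{ws_\alpha}(x;a)$ and apply the quantization map $\qt$, which is $\Z[a]$-linear and sends $\Schub_v(x;a)$ to $\qSchub_v(x;a)$ by \eqref{E:quantCauchy}. This produces the classical-plus-equivariant portion of the right-hand side, so the proof reduces to verifying the residual identity
\[
\omega_i(x)\,\qSchub_w(x;a) - \qt\bigl(\omega_i(x)\,\Schub_w(x;a)\bigr) = \sum_{\alpha\in B_w} q_{\alpha^\vee}\ip{\alpha^\vee}{\omega_i}\,\qSchub_{ws_\alpha}(x;a).
\]

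I would verify this residual identity by expanding both sides in the $\Z[q,a]$-basis $\{\qSchub_u(x) : u \in S_\infty\}$ of $\Z[x;q;a]$. Applying the Cauchy formulae \eqref{E:dsCauchy} and \eqref{E:quantCauchy} to $\Schub_w(x;a)$, $\qSchub_w(x;a)$, and $\qSchub_{ws_\alpha}(x;a)$, and then invoking the non-equivariant Chevalley--Monk formulas of Propositions \ref{P:poly} and \ref{P:Qpoly} to expand $\omega_i(x)\Schub_v(x)$ and $\omega_i(x)\qSchub_v(x)$ termwise, the $A_v$-contributions cancel on the left-hand side and both sides assume the form
\begin{align*}
\text{LHS} &= \sum_{v\wleq w}\Schub_{vw^{-1}}(-a)\sum_{\beta\in B_v} q_{\beta^\vee}\ip{\beta^\vee}{\omega_i}\,\qSchub_{vs_\beta}(x), \\
\text{RHS} &= \sum_{\alpha\in B_w} q_{\alpha^\vee}\ip{\alpha^\vee}{\omega_i}\sum_{u\wleq ws_\alpha}\Schub_{u(ws_\alpha)^{-1}}(-a)\,\qSchub_u(x).
\end{align*}

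The main obstacle will be matching coefficients. Since in type $A$ the coroot $\alpha^\vee$ determines the positive root $\alpha$ uniquely, and since $us_\alpha w^{-1} = u(ws_\alpha)^{-1}$ makes the polynomial prefactors $\Schub_{us_\alpha w^{-1}}(-a)$ and $\Schub_{u(ws_\alpha)^{-1}}(-a)$ coincide, comparing the coefficient of $q^\gamma\,\qSchub_u(x)$ reduces the problem to a single combinatorial equivalence: $us_\alpha\wleq w$ and $\alpha\in B_{us_\alpha}$ if and only if $\alpha\in B_w$ and $u\wleq ws_\alpha$. The identity $\ip{\alpha^\vee}{2\rho}=\ell(s_\alpha)+1$, valid in type $A$, rewrites the condition ``$\alpha\in B_v$'' as the length-maximal drop $\ell(vs_\alpha)=\ell(v)-\ell(s_\alpha)$; both sides of the equivalence then unfold to the single length-additive condition $\ell(w)=\ell(ws_\alpha u^{-1})+\ell(u)+\ell(s_\alpha)$, asserting that $w=(ws_\alpha u^{-1})\cdot u\cdot s_\alpha$ is a reduced factorization. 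The hard part lies in checking this equivalence cleanly, since $s_\alpha$ is typically non-simple; but a direct length-count argument using the characterization of left weak order completes the proof.
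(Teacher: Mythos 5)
Your proposal is correct and follows essentially the same route as the paper: substitute the explicit formula for $\qSchub_{s_i}(x;a)$, combine the equivariant Chevalley--Monk rule for double Schubert polynomials with the Cauchy formulae and the quantization map, and reduce to the residual identity \eqref{E:corrections}, which is then verified by matching index pairs via the bijection $(v,\alpha)\mapsto (vs_\alpha,\alpha)$ using $\ell(s_\alpha)=\ip{\alpha^\vee}{2\rho}-1$ and length-additivity of weak order. The only cosmetic difference is that you collapse both directions of the bijection into a single symmetric length-additive condition $\ell(w)=\ell(ws_\alpha u^{-1})+\ell(u)+\ell(s_\alpha)$, whereas the paper checks the two inclusions separately; the content is identical.
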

\begin{proof} Starting with \eqref{E:ECpoly} and using 
Lemma \ref{L:qdsreflection} and \eqref{E:dsCauchy} we have
\begin{align*}
  0 &= -\Schub_{s_i}(x;a)\Schub_w(x;a) + (-\omega_i(a)+w\cdot\omega_i(a))\Schub_w(x;a)+\sum_{\alpha\in A_w} \ip{\alpha^\vee}{\omega_i} \Schub_{ws_\alpha}(x;a) \\
  &=(\omega_i(a)-\Schub_{s_i}(x))\Schub_w(x;a) + (-\omega_i(a)+w\cdot\omega_i(a))\Schub_w(x;a)+\sum_{\alpha\in A_w} \ip{\alpha^\vee}{\omega_i} \Schub_{ws_\alpha}(x;a) \\
  &=-\Schub_{s_i}(x)\sum_{v\wleq w} \Schub_{vw^{-1}}(-a)\Schub_v(x) + (w\cdot\omega_i(a))\Schub_w(x;a)+\sum_{\alpha\in A_w} \ip{\alpha^\vee}{\omega_i} \Schub_{ws_\alpha}(x;a) \\
  &= -\sum_{v\wleq w} \Schub_{vw^{-1}}(-a)\sum_{\alpha\in A_v} \ip{\alpha^\vee}{\omega_i}\Schub_{vs_\alpha}(x) + (w\cdot\omega_i(a))\Schub_w(x;a)+\sum_{\alpha\in A_w} \ip{\alpha^\vee}{\omega_i} \Schub_{ws_\alpha}(x;a).
\end{align*}
Quantizing and rearranging, we have
\begin{align*}
&\,(w\cdot\omega_i(a))\qSchub_w(x;a)+\sum_{\alpha\in A_w} \ip{\alpha^\vee}{\omega_i} \qSchub_{ws_\alpha}(x;a) \\
&= \sum_{v\wleq w} \Schub_{vw^{-1}}(-a)\sum_{\alpha\in A_v} \ip{\alpha^\vee}{\omega_i}\qSchub_{vs_\alpha}(x) \\
&= \sum_{v\wleq w} \Schub_{vw^{-1}}(-a)\left(\qSchub_{s_i}(x) \qSchub_v(x)-\sum_{\alpha\in B_v} \ip{\alpha^\vee}{\omega_i}\qSchub_{vs_\alpha}(x)
\right) \\
&= \qSchub_{s_i}(x) \qSchub_w(x;a)-\sum_{v\wleq w} \Schub_{vw^{-1}}(-a)\sum_{\alpha\in B_v} \ip{\alpha^\vee}{\omega_i}\qSchub_{vs_\alpha}(x).
\end{align*}
Therefore to prove \eqref{E:EQCpoly} it suffices to show that
\begin{align}\label{E:corrections}
&\,\,\,\sum_{v\wleq w} \Schub_{vw^{-1}}(-a) \sum_{\alpha\in B_v} q_{\alpha^\vee} \ip{\alpha^\vee}{\omega_i} \qSchub_{vs_\alpha}(x) \\
\notag &=  \sum_{\alpha\in B_w}  \ip{\alpha^\vee}{\omega_i} q_{\alpha^\vee} 
\sum_{v\wleq ws_\alpha} \Schub_{vs_\alpha w^{-1}}(-a) \qSchub_v(x).
\end{align}
Let 
\begin{align*}
  A &= \{(v,\alpha)\in W\times \Phi^+ \mid \text{$v\wleq w$ and $\alpha\in B_v$} \} \\
  B &= \{(u,\alpha)\in W\times \Phi^+ \mid \text{$u\wleq ws_\alpha$ and $\alpha\in B_w$} \}.
\end{align*}
To prove \eqref{E:corrections} it suffices to show that there is a bijection $A\to B$ given by
$(v,\alpha)\mapsto (vs_\alpha,\alpha)$.

Let $(v,\alpha)\in A$. Then $w = (wv^{-1})(v)$ is length-additive since $v\wleq w$
and $v = (vs_\alpha)(s_\alpha)$ is length-additive because $\ell(s_\alpha)=\ip{\alpha^\vee}{2\rho}-1$
and $\alpha\in B_v$. Therefore $w = (wv^{-1})(vs_\alpha)(s_\alpha)$ is length-additive.
It follows that $ws_\alpha = (wv^{-1})(vs_\alpha)$ is length-additive
and that $vs_\alpha \wleq ws_\alpha$.
Moreover we have $\ell(ws_\alpha)=\ell(wv^{-1})+\ell(vs_\alpha)=\ell(wv^{-1})+\ell(v)+1-\ip{\alpha^\vee}{2\rho}
=\ell(w)+1-\ip{\alpha^\vee}{2\rho}$. Therefore $(vs_\alpha,\alpha)\in B$.

Conversely suppose $(u,\alpha)\in B$. Let $v=u s_\alpha$.
Arguing as before, $w = (ws_\alpha u^{-1})(u)(s_\alpha) = (w v^{-1})(v)$
are length-additive. We deduce that $v\wleq w$ and that 
$\ell(v)=\ell(w)-\ell(wv^{-1})=\ell(ws_\alpha)+1-\ip{\alpha^\vee}{2\rho}-\ell(wv^{-1})=\ell(vs_\alpha)+1-\ip{\alpha^\vee}{2\rho}$
so that $\alpha\in B_v$ as required.
\end{proof}

\section{Proof of Theorem \ref{T:main}}
\label{S:proofmain}

Let $I^a$ be the ideal in $\Z[x,a]$ generated by $e_i^p(x)-e_i^p(a)$ for $p\ge n$ and $i\ge1$,
and $a_i$ for $i > n$. Let $J^a\subset\Z[x,a]$ be the $\Z[a]$-submodule spanned by
$\Schub_w(x;a)$ for $w\in S_\infty\setminus S_n$ and $a_i \Schub_u(x;a)$ for $i >n$
and any $u\in S_\infty$. We shall show that $I^a=J^a$. Let $c_{i,p}=s_{p+1-i}\dotsm s_{p-2} s_{p-1} s_p\in S_\infty\setminus S_p$
be the cycle of length $i$. We note that the family
$\{e_i^p(x)-e_i^p(a)\mid 1\le i\le p\}$ is unitriangular over $\Z[a]$ with
the family $\{\Schub_{c_{i,p}}(x;a)\mid 1\le i\le p\}$. Since $\Z[x,a] = \bigoplus_{u\in S_\infty} \Z[a] \Schub_u(x;a)$,
to show that $I^a\subset J^a$ it suffices to show that $\Schub_u(x;a) \Schub_{c_{i,p}}(x,a)\in J^a$
for all $p\ge n$, $i\ge1$, and $u\in S_\infty$. But this follows from the fact that the product of
$\Schub_u(x,a)\Schub_v(x;a)$ is a $\Z[a]$-linear combination of $\Schub_w(x;a)$ where $w\ge u$ and $w\ge v$.

Let $K$ be the ideal in $\Z[x,a]$ generated by $a_i$ for $i > n$.
Then $I^a/K$ has $\Z[a_1,\dotsc,a_n]$-basis given by standard monomials $e_I$
with $i_r>0$ for some $r\ge n$, while $J^a/K$ has $\Z[a_1,\dotsc,a_n]$-basis given by $\Schub_w(x;a)$ for $w\in S_\infty\setminus S_n$.
The quotient ring $\Z[x,a]/K$ has $\Z[a_1,\dotsc,a_n]$-basis given by all standard monomials $e_I$ for $I=(i_1,i_2,\dotsc)$ with
$0\le i_p \le p$ for all $p \ge1$ and almost all $i_p$ zero, and also by
all double Schubert polynomials $\Schub_w(x;a)$ for $w\in S_\infty$.
But the standard monomials $e_I$ with $i_n=i_{n+1}=\dotsm=0$
are in graded bijection with the $\Schub_w(x;a)$ for $w\in S_n$. It follows that $I^a=J^a$ by graded dimension counting.

Let $J_\infty^{qa}$ be the ideal of $\Z[x,q,a]$
generated by $E_i^p-e_i^p(a)$ for all $i\ge1$ and $p\ge n$, together with $q_i$ for $i\ge n$ 
and $a_i$ for $i > n$. We wish to show that 
\begin{align}\label{E:bigqSchubideal}
  \qSchub_w(x;a)\in J_\infty^{qa}\qquad\text{for all $w\in S_\infty\setminus S_n$}.
\end{align}
For this it suffices to show that 
\begin{align}\label{E:thetaimage}
\theta(J^a_\infty) \subset J^{qa}_\infty.
\end{align}


To prove \eqref{E:thetaimage} it suffices to show that 
\begin{align}\label{E:thetastdgen}
  \theta(e_I(e_i^p(x)-e_i^p(a))) \in J^{qa}_\infty\qquad\text{for standard monomials $e_I$, $i\ge1$ and $p\ge n$.}
\end{align}
To apply $\theta$ to this element we must express $e_I e_i^p$ in standard monomials.
The only nonstandardness that can occur is if $i_p > 0$. In that case one may use \cite[(3.2)]{FGP}:
\begin{align*}
  e_i^p e_j^p &= e_{i-1}^p e_{j+1}^p +  e_j^p e_i^{p+1}- e_{i-1}^p e_{j+1}^{p+1}.
\end{align*}
Note that ultimately the straightening of $e_I e_i^p$ into standard monomials,
only changes factors of the form $e_k^q$ for $k\ge1$ and $q\ge p$.

Let $E_I = \prod_{r\ge1} E_{i_r}^r$ for $I=(i_1,i_2,\dotsc)$. 
If we consider $E_I (E_i^p - e_i^p(a))$ and use \cite[(3.6)]{FGP}
\begin{align*}
    E_i^p E_j^p  &= E_{i-1}^pE_{j+1}^p  +
  E_j^p E_i^{p+1}-E_{i-1}^p E_{j+1}^{p+1} +  q_p( E_{j-1}^{p-1} E_{i-1}^p- E_{i-2}^{p-1} E_j^p).
\end{align*}
to rewrite it into quantized standard monomials, we see that
the two straightening processes differ only by multiples of $q_p$, $q_{p+1}$, etc. 
Therefore 
\begin{align*}
  \theta(e_I (e_i^p(x)-e_i^p(a))) - E_I(E_i^p-e_i^p(a)) \in J_\infty^{qa}
\end{align*} 
But $E_I(E_i^p-e_i^p(a))\in J_\infty^{qa}$ so \eqref{E:thetastdgen} holds
and \eqref{E:bigqSchubideal} follows.

The ring $\Z[x,q,a]/J_\infty^{qa}$ has a $\Z[q_1,\dotsc,q_{n-1};a_1,\dotsc,a_n]$-basis
given by $\qSchub_w(x;a)$ for $w\in S_n$. This follows from Lemmata \ref{L:lead} and \ref{L:code}.
Moreover this basis satisfies the equivariant quantum Chevalley-Monk rule for $SL_n$ by Proposition \ref{P:QEChevalley}.
By Theorem \ref{T:QHTchar} there is an isomorphism of
$\Z[q_1,\dotsc,q_{n-1};a_1,\dotsc,a_n]$-algebras $QH^T(SL_n/B)\to \Z[x,q,a]/J_\infty^{qa}$.
Moreover, $\sigma^w_T$ and $\qSchub_w(x;a)$ (or rather, its preimage in $QH^T(\Fl)$)
are related by an automorphism of $QH^T(\Fl)$. 
But the Schubert divisor class $\sigma^{s_i}_T$ is (by definition) represented by a usual double Schubert polynomial $\Schub_{s_i}(x;a) = \qSchub_{s_i}(x;a)$ (Lemma \ref{L:qdsreflection}) in Kim's presentation, and these divisor classes generate $QH^T(\Fl)$ over 
$\Z[q_1,\dotsc,q_{n-1};a_1,\dotsc,a_n]$.  Thus the automorphism must be the identity, completing the proof.

\section{Parabolic case}
\label{S:para}
\subsection{Notation}
Fix a composition $(n_1,n_2,\dotsc,n_k)\in\Z_{>0}^k$ with $n_1+n_2+\dotsm+n_k=n$.
Let $P\subset SL_n(\C)$ be the parabolic subgroup consisting of block upper triangular
matrices with block sizes $n_1,n_2,\dotsc,n_k$. Then $SL_n/P$ is isomorphic to the
variety of partial flags in $\C^n$
with subspaces of dimensions $N_j:=n_1+n_2+\dotsm+n_j$ for $0\le j\le k$.
Denote by $W_P$ the Weyl group for the Levi factor of $P$ and $W^P$ the
set of minimum length coset representatives in $W/W_P$.
For every $w\in W$ there exists unique elements $w^P\in W^P$ and $w_P\in W_P$ such that
$w=w^P w_P$; moreover this factorization is length-additive. Let $w_0\in W$ be the longest element and 
let $w_0=w_0^Pw_{0,P}$ so that $w_{0,P}\in W_P$ is the longest element.

\begin{example}\label{X:parabolic} Let $(n_1,n_2,n_3)=(2,1,3)$. Then $(N_1,N_2,N_3)=(2,3,6)$,
$w_0^P=564123$ (that is, $w\in S_6 $ is the permutation with $w(1)=5$, $w(2)=6$, etc.),
and $w_{0,P}=213654$.
\end{example}

\subsection{Parabolic quantum double Schubert polynomials}

Let $\Z[x_1,\dotsc,x_n;q_1,\dotsc,q_{k-1}]$ be the graded polynomial ring
with $\deg(x_i)=1$ and $\deg(q_j)=n_j+n_{j+1}$ for $1\le j\le k-1$.

Following \cite{AS} \cite{Cio2} let $D=D^P$ be the $n\times n$ matrix with entries $x_i$ on the diagonal,
$-1$ on the superdiagonal, and entry $(N_{j+1},N_{j-1}+1)$ given by $-(-1)^{n_{j+1}} q_j$
for $1\le j\le k-1$. For $1\le j\le k$ let $D_j$ be the upper left
$N_j \times N_j$ submatrix of $D$ and for $1\le i\le N_j$ define
the elements $G_i^j \in \Z[x;q]$ by 
\begin{align*}
  \det(D_j-t\,\Id) = \sum_{i=0}^{N_j} (-t)^{N_j-i} G_i^j.
\end{align*}
The polynomial $G_i^j$ is homogeneous of degree $i$. 
For $w\in W^P$, we define the parabolic quantum double Schubert polynomial
$\qSchub^P_w(x;a)$ by 
\begin{align}
  \qSchub_w^P(x;a) &= (-1)^{\ell(w (w_0^P)^{-1})}\partial_{w (w_0^P)^{-1}}^a \prod_{j=1}^{k-1} \prod_{i=n-N_{j+1}+1}^{n-N_j} \det(D_j-a_i \Id).
\end{align}

\begin{example} \label{X:parabolicqSchub}
Continuing Example \ref{X:parabolic} we have
\begin{align*}
\qSchub^P_{w_0^P}(x;a) &= \det(D_1-a_4\Id) \det(D_2-a_1\Id)\det(D_2-a_2\Id)\det(D_2-a_3\Id)\\
&=(x_1-a_4)(x_2-a_4)\prod_{i=1}^3((x_1-a_i)(x_2-a_i)(x_3-a_i)+q_1).
\end{align*}
\end{example}

The parabolic quantum double Schubert polynomials $\qSchub^P_w(x;a)$
have specializations similar to the quantum double Schubert polynomials. Let $w\in W^P$.
\begin{enumerate}
\item We define the parabolic quantum Schubert polynomials by the specialization
$\qSchub^P_w(x)=\qSchub^P_w(x;0)$
which sets $a_i=0$ for all $i$. In Lemma \ref{L:qparaspec} it is shown that
these polynomials coincide with those of Ciocan-Fontanine \cite{Cio2},
whose definition uses a parabolic analogue of the quantization map of \cite{FGP}.
\item Setting $q_i=0$ for all $i$ one obtains the double Schubert polynomial
$\Schub_w(x;a)$.
\item Setting both $a_i$ and $q_i$ to zero one obtains the Schubert polynomial
$\Schub_w(x)$.
\end{enumerate}

Let $J_P$ be the ideal in $\Z[x]^{W_P}$ (resp. $J_P^a\subset S[x]^{W_P}$, 
$J_P^q\subset \Z[x]^{W_P}[q]$, $J_P^{qa}\subset S[x]^{W_P}[q]$)
generated by the elements $e_i^n(x)$, (resp. 
$e_i^n(x)-e_i^n(a)$, $G_i^k$, $G_i^k-e_i^n(a)$) for $1\le i\le n$.  The aim of this section is to establish (4) of the following theorem.

\begin{theorem} \label{T:mainpara} \
\begin{enumerate}
\item There is an isomorphism of $\Z$-algebras \cite{BGG, LS}
\begin{align*}
H^*(SL_n/P) &\cong \Z[x]^{W_P}/J_P \\
[X_w] &\mapsto \Schub_w(x)+J_P.
\end{align*} 
\item There is an isomorphism of $S$-algebras \cite{Bi}
\begin{align*}
H^T(SL_n/P) &\cong S[x]^{W_P}/J_P^a \\
[X_w]_T &\mapsto \Schub_w(x;a)+J_P^a.
\end{align*}
\item There is an isomorphism of $\Z[q]$-algebras \cite{AS} \cite{Kim2} \cite{Kim3}
\begin{align*}
QH^*(SL_n/P) &\cong \Z[x]^{W_P}[q]/J_P^q \\
\sigma^{P,w} &\mapsto \qSchub_w^P(x)+J_P^q.
\end{align*}
\item
There is an isomorphism of $S[q]$-algebras
\begin{align}
\label{E:parabquanteqiso}
QH^T(SL_n/P) &\cong S[x]^{W_P}[q]/J_P^{qa} \\
\label{E:parabquanteqisoclass}
\sigma^{P,w}_T&\mapsto \qSchub_w^P(x;a) + J_P^{qa}.
\end{align}
\end{enumerate}
Here $[X_w]$, $[X_w]_T$, $\sigma^{P,w}$, and $\sigma^{P,w}_T$,
denote the Schubert bases for their respective cohomology rings for $w\in W^P$.
\end{theorem}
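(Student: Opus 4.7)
The proof will parallel that of Theorem \ref{T:main}, proceeding in four steps. First, I would verify that $\{\qSchub^P_w(x;a) \mid w\in W^P\}$ is an $S[q]$-basis of $S[x]^{W_P}[q]/J_P^{qa}$. The $W_P$-invariance of each $\qSchub^P_w(x;a)$ in the $x$-variables follows from two observations: the seed polynomial $\prod_{j=1}^{k-1}\prod_{i=n-N_{j+1}+1}^{n-N_j}\det(D_j - a_i\,\Id)$ is $W_P$-invariant in $x$ (each generator $G_i^j$ is symmetric in $x_1,\dotsc,x_{N_j}$), and the divided differences $\partial^a_{w(w_0^P)^{-1}}$ act only on the $a$-variables. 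A leading-term argument analogous to Lemma \ref{L:basisquot}, with codes now ranging over the tuples corresponding to $W^P$, then establishes the basis property.

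Second, I would establish a parabolic equivariant quantum Chevalley-Monk identity for the $\qSchub^P_w$. The simple reflections in $W^P$ of length $1$ are precisely the $s_{N_j}$ for $1 \le j \le k-1$; these index the Schubert divisors of $SL_n/P$. Mimicking the derivation of Proposition \ref{P:QEChevalley}, I would begin by proving the parabolic analog of Proposition \ref{P:ECpoly}---the equivariant Chevalley-Monk identity for double Schubert polynomials indexed by $W^P$, viewed in $S[x]^{W_P}[a]$. Then, using the parabolic quantization map of Ciocan-Fontanine \cite{Cio2} together with the parabolic Cauchy formula, I would quantize to obtain an identity of the form
\[
\qSchub^P_{s_{N_j}}(x;a)\,\qSchub^P_w(x;a) = (-\omega_{N_j}(a)+w\cdot\omega_{N_j}(a))\,\qSchub^P_w(x;a) + \sum(\text{classical}) + \sum(\text{quantum}),
\]
where the quantum corrections involve the variables $q_1,\dotsc,q_{k-1}$ indexed by coroots projecting nontrivially to the parabolic quotient of the coroot lattice.

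Third, I would invoke a parabolic extension of Mihalcea's characterization (Theorem \ref{T:QHTchar})---either available in the literature or obtainable by reduction from the complete flag case---asserting that the equivariant quantum Chevalley-Monk rule characterizes the Schubert basis of $QH^T(SL_n/P)$ up to $S[q]$-algebra isomorphism. Combining this with Steps 1 and 2 yields the isomorphism \eqref{E:parabquanteqiso} under which $\sigma^{P,w}_T$ corresponds to $\qSchub^P_w(x;a)$ up to an $S[q]$-algebra automorphism. Since each divisor class $\sigma^{P,s_{N_j}}_T$ is represented in Kim's parabolic presentation by the ordinary double Schubert polynomial $\Schub_{s_{N_j}}(x;a) = \omega_{N_j}(x)-\omega_{N_j}(a) = \qSchub^P_{s_{N_j}}(x;a)$, and since these divisors generate $QH^T(SL_n/P)$ as an $S[q]$-algebra, the automorphism in question must be the identity, establishing \eqref{E:parabquanteqisoclass}.

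The main obstacle is Step 2. The combinatorics of quantum corrections in the parabolic setting is more delicate: one must identify precisely which roots contribute quantum terms after projection to the quotient coroot lattice, and one must verify that the parabolic quantization map (which differs from the map $\qt$ used in the proof of Theorem \ref{T:main}) commutes appropriately with the $a$-variable divided differences. Once Step 2 is in hand, Steps 3 and 4 follow in close analogy with the complete flag case.
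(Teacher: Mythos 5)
Your overall strategy mirrors the paper's: establish a basis for the quotient presentation, prove a parabolic equivariant quantum Chevalley--Monk identity (Proposition~\ref{P:pqeChev}), invoke Mihalcea's characterization of $QH^T(SL_n/P)$ (the paper states this as Theorem~\ref{T:Mipara}, citing \cite{Mi} directly), and then pin down the remaining $S[q]$-algebra automorphism using divisor classes. However, one justification is incorrect and two substantive points are missing.

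A small slip: $G_i^j$ is \emph{not} symmetric in $x_1,\dotsc,x_{N_j}$ in general; already in the Borel case $E_3^3=x_1x_2x_3+x_1q_2+q_1x_3$ fails to be symmetric. The correct assertion is that $G_i^j$ is $W_P$-invariant, i.e., symmetric within each block of size $n_m$. This weaker fact still suffices for your conclusion that $\qSchub^P_w(x;a)\in S[x]^{W_P}[q]$.

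A first gap: the bijection argument behind Proposition~\ref{P:pqeChev} needs every $\alpha\in B_{P,w}$ to also satisfy the full length condition $\ell(ws_\alpha)=\ell(w)+1-\ip{\alpha^\vee}{2\rho}$. This is not automatic and the paper relies on \cite[Lemma 10.14]{LamSh:qaf} for it; without this fact the map $(v,\alpha)\mapsto(\pi_P(vs_\alpha),\alpha)$ is not obviously well-defined. Your Step 2 sketch omits this input.

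The main gap, and the one you misidentify: Proposition~\ref{P:pqeChev} is an identity in $\Z[x,q,a]$ over infinitely many variables, and the terms $\qSchub^P_{\pi_P(ws_\alpha)}(x;a)$ on its right-hand side may be indexed by permutations outside $S_n$. To conclude that the Chevalley--Monk rule holds in $S[x]^{W_P}[q]/J_P^{qa}$ --- the form in which Theorem~\ref{T:Mipara} can be applied --- one must prove that those excess terms vanish modulo the ideal. Your leading-term argument only gives that $\{\qSchub^P_w(x;a):w\in W^P\}$ is a basis of the quotient; it does not by itself show $\qSchub^P_u(x;a)\in J_P^{qa}$ when $u$ escapes $S_n$. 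The paper devotes most of the proof to exactly this: it passes to the infinite composition $P_\infty=(n_1,\dotsc,n_k,1,1,\dotsc)$ using the stability result of \S\ref{SS:parabstable}, shows that the ideal $J^a_{P,\infty}$ coincides with the $\Z[a]$-module $I^a_{P,\infty}$ spanned by double Schubert polynomials for $w\in W^{P_\infty}\setminus S_n$, and then proves $\qt^{P_\infty}(J^a_{P,\infty})\subset J^{qa}_{P,\infty}$ by comparing the classical and quantum parabolic straightening relations derived from \cite[(3.2)]{FGP} and \cite[(3.5)]{Cio2}. This ideal-theoretic reduction, not the combinatorics of quantum corrections in Step 2, is where the bulk of the technical difficulty lies, and your sketch leaves it entirely unaddressed.
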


The isomorphism \eqref{E:parabquanteqiso} is due to \cite{Kim3}. We shall establish \eqref{E:parabquanteqisoclass}, namely, that under this isomorphism,
the parabolic quantum double Schubert polynomials
are the images of parabolic equivariant quantum Schubert classes.

\subsection{Stability of parabolic quantum double Schubert polynomials}
\label{SS:parabstable}
The following Lemma can be verified by direct computation and induction.

\begin{lem} Let $\beta=(\beta_1,\dotsc,\beta_n)\in\Z_{\ge0}^n$ be such that 
$\beta_i \le n-i$ for $2\le i\le n$. We have
\begin{align*}
  \partial_{n-1}^a\dotsm \partial_2^a \partial_1^a \cdot a^\beta &=
  \begin{cases}
  0 &\text{if $\beta_1 <n-1$} \\
  a_1^{\beta_2}a_2^{\beta_3}\dotsm a_{n-1}^{\beta_n} &\text{if $\beta_1=n-1$.}
  \end{cases}
\end{align*}
\end{lem}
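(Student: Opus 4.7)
The plan is to induct on $n$, reducing the $n$-variable calculation to an $(n-1)$-variable one. The base case $n = 2$ is immediate, since the constraint forces $\beta_2 = 0$ and $\partial_1^a \cdot a_1^{\beta_1}$ equals $0$ for $\beta_1 = 0$ and $1$ for $\beta_1 = 1$, matching the two cases.

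For the inductive step I would first apply the innermost operator $\partial_1^a$. A direct evaluation of
\[
\partial_1^a(a_1^{\beta_1} a_2^{\beta_2}) \;=\; \frac{a_1^{\beta_1} a_2^{\beta_2} - a_2^{\beta_1} a_1^{\beta_2}}{a_1 - a_2} \;=\; \sum_s c_s\, a_1^{\beta_1+\beta_2-1-s}\, a_2^{s}
\]
gives $c_s = +1$ on $s \in [\beta_2,\beta_1-1]$ when $\beta_1 > \beta_2$, $c_s = -1$ on $s \in [\beta_1,\beta_2-1]$ when $\beta_1 < \beta_2$, and $c_s \equiv 0$ when $\beta_1 = \beta_2$. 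Because $\partial_2^a, \dotsc, \partial_{n-1}^a$ leave $a_1$ untouched, applying them to $\partial_1^a(a^\beta)$ will produce
\[
\partial_{n-1}^a \dotsm \partial_1^a \cdot a^{\beta} \;=\; \sum_s c_s\, a_1^{\beta_1+\beta_2-1-s}\, \partial_{n-1}^a \dotsm \partial_2^a\bigl(a_2^{s} a_3^{\beta_3} \dotsm a_n^{\beta_n}\bigr).
\]

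Under the relabeling $a_{j+1} \mapsto a_j$, the inner operator becomes the rank-$(n-1)$ instance of the lemma applied to the exponent vector $(s, \beta_3, \dotsc, \beta_n)$, and the bounds $\beta_i \le n-i$ for $3 \le i \le n$ translate into exactly the required hypothesis at rank $n-1$. The induction hypothesis will then annihilate every term with $s \ne n-2$ and evaluate the $s = n-2$ term to $a_2^{\beta_3} \dotsm a_{n-1}^{\beta_n}$. A short check using $\beta_2 \le n-2$ will show that $c_{n-2} = 0$ in all three regimes whenever $\beta_1 \le n-2$ (so the total vanishes), whereas $\beta_1 = n-1$ automatically forces $\beta_1 > \beta_2$ with $s = n-2 \in [\beta_2, n-2]$, giving $c_{n-2} = 1$ and $\beta_1 + \beta_2 - 1 - (n-2) = \beta_2$, which produces $a_1^{\beta_2} a_2^{\beta_3} \dotsm a_{n-1}^{\beta_n}$ as claimed.

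The main obstacle should just be the mild case analysis tracking when $s = n-2$ lies in the support of $\partial_1^a(a_1^{\beta_1} a_2^{\beta_2})$; everything else is mechanical, consistent with the authors' remark that the lemma follows by direct computation and induction.
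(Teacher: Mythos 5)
Your proof is correct and follows exactly the route the paper gestures at (the paper offers no written proof, saying only that the lemma ``can be verified by direct computation and induction''). Your computation of $\partial_1^a(a_1^{\beta_1}a_2^{\beta_2})$, the observation that $\partial_2^a,\dotsc,\partial_{n-1}^a$ are $\Z[a_1]$-linear so the power of $a_1$ factors out, the relabeling to invoke the rank-$(n-1)$ statement, the bound $s\le\max(\beta_1,\beta_2)-1\le n-2$ ensuring the induction hypothesis covers every surviving term, and the case analysis on $c_{n-2}$ all check out.
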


Suppose $w\in W^P$ is such that $w(r)=r$ for $N_{k-1}<r\le n$.
Let $w_0^{(p,q)}$ be the minimum length coset representative
of the longest element in $S_{p+q}/(S_p\times S_q)$ and let
$w_0^{P_-}$ be the minimum length coset representative of the longest
element in $S_{N_{k-1}}/(S_{n_1}\times\dotsm\times S_{n_{k-1}})$.
We have the length-additive factorization $w_0^P = w_0^{(N_{k-1},n_k)} w_0^{P_-}$.
Also $\ell(w (w_0^P)^{-1}) = \ell(w (w_0^{P_-})^{-1})+\ell(w_0^{(n_k,N_{k-1})})$.
Using the above Lemma repeatedly we have
\begin{align*}
(-1)^{\ell(ww_0^P)} \qSchub_w^P(x;a) &=
\partial_{w(w_0^P)^{-1}}^a \prod_{j=1}^{k-1} \prod_{i=n-N_{j+1}+1}^{n-N_j} \det(D_j-a_i\Id) \\
&= \partial_{w w_0^{P_-}}^a \partial_{w_0^{(n_k,N_{k-1})}}^a \left(\prod_{j=1}^{k-2} \prod_{i=n-N_{j+1}+1}^{n-N_j} \det(D_j-a_i\Id)\right)
\prod_{i=1}^{n_k} \det(D_{k-1}-a_i\Id) \\
&= (-1)^{\ell(w_0^{(n_k,N_{k-1})})}\partial_{w w_0^{P_-}}^a w_0^{(n_k,N_{k-1})} \cdot \left(\prod_{j=1}^{k-2} \prod_{i=n-N_{j+1}+1}^{n-N_j} \det(D_j-a_i\Id)\right) \\
&=  (-1)^{\ell(w_0^{(n_k,N_{k-1})})}\partial_{w w_0^{P_-}}^a \prod_{j=1}^{k-2} \prod_{i=N_{k-1}-N_{j+1}+1}^{N_{k-1}-N_j} \det(D_j-a_i\Id) \\
&=  (-1)^{\ell(w_0^{(n_k,N_{k-1})})} (-1)^{\ell(w(w_0^{P_-})^{-1})} \qSchub_w^{P_-}(x;a)
\end{align*}
The final outcome is
\begin{align*}
  \qSchub_w^P(x;a) &= \qSchub_w^{P_-}(x;a).
\end{align*}

This means that if we append a block of size $m$ to $\nd$ and 
append $m$ fixed points to $w\in W^P$, the parabolic quantum double Schubert
polynomial remains the same.

\subsection{Stable parabolic quantization}
This section follows \cite{Cio2}.
Consider an infinite sequence of positive integers $\nd=(n_1,n_2,\dotsc)$.
Let $N_j=n_1+\dotsm+n_j$ for $j\ge 1$. Let $W=S_\infty =\bigcup_{n\ge1} S_n$
be the infinite symmetric group (under the inclusion maps $S_n\to S_{n+1}$
that add a fixed point at the end), $W_P$ the subgroup of $W$
generated by $s_i$ for $i\notin \{N_1,N_2,\dotsc\}$, $W^P$ the set of minimum length
coset representatives in $W/W_P$, etc. Let $\Y^P$ be the set of tuples of partitions
$\ladot=(\la^{(1)},\la^{(2)},\dotsc)$ such that $\la^{(j)}$ is contained in the rectangle
with $n_{j+1}$ rows and $N_j$ columns and almost all $\la^{(j)}$ are empty.
Define the standard monomial by
\begin{align*}
  g_\ladot = \prod_{j\ge1} \prod_{i=1}^{n_{j+1}} g_{\la^{(j)}_i}^j
\end{align*}
where $g_r^j=e_r^{N_j}(x)$. The following is a consequence of \cite{Cio2} by taking a limit.

\begin{prop} \cite{Cio2} $\{g_\ladot\mid \ladot\in\Y^P \}$ is
a $\Z$-basis of $\Z[x]^{W_P}$.
\end{prop}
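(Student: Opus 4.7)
The plan is to deduce this from Ciocan-Fontanine's finite-case basis theorem \cite{Cio2} by a direct-limit argument. For each finite composition $(n_1,\dotsc,n_k)$, \cite{Cio2} establishes that the analogous $g_\ladot$, indexed by $\ladot=(\la^{(1)},\dotsc,\la^{(k-1)})$ with $\la^{(j)}$ in the $n_{j+1}\times N_j$ rectangle, form a $\Z$-basis of the coinvariant ring $\Z[x_1,\dotsc,x_{N_k}]^{W_P}/(g^k_1,\dotsc,g^k_{N_k})$. Using freeness of the invariant ring over its $S_{N_k}$-invariant subring $\Z[g^k_1,\dotsc,g^k_{N_k}]$, this upgrades to a $\Z$-basis $\{g_\ladot\cdot\prod_i g^k_{\mu_i}\}$ of $\Z[x_1,\dotsc,x_{N_k}]^{W_P}$ itself, with $\mu$ ranging over partitions with parts in $[1,N_k]$. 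In the infinite setting, $\Z[x]^{W_P}=\bigcup_k\Z[x_1,\dotsc,x_{N_k}]^{W_P}$ is a directed union, and each $g_\ladot$ for $\ladot\in\Y^P$ has finite support and so lies in this union.

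For linear independence of $\{g_\ladot\}_{\ladot\in\Y^P}$, I would take a finite dependence $\sum c_\ladot g_\ladot=0$ and choose $K$ so that all involved $\la^{(j)}$ vanish for $j>K$; the relation then lies in $\Z[x_1,\dotsc,x_{N_{K+1}}]^{W_P}$, where the participating $g_\ladot$'s are among the CF basis of the coinvariant of the truncation $(n_1,\dotsc,n_{K+1})$, forcing each $c_\ladot=0$.

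For spanning, I would express any $f\in\Z[x_1,\dotsc,x_{N_K}]^{W_P}$ in the finite-case basis as $f=\sum c_{\ladot,\mu}\,g_\ladot\cdot\prod_i g^K_{\mu_i}$, and rewrite each summand as a $\Z$-combination of $\{g_\ladot:\ladot\in\Y^P\}$. Summands where $\mu$ has at most $n_{K+1}$ parts match $g_{\ladot'}$ directly via $\la^{(K)}=\mu$; the overflow summands are reduced using the block-factorization identity
\begin{equation*}
g^{K+1}_r = \sum_{s=0}^{\min(r,n_{K+1})}\tilde e^{(K+1)}_s\, g^K_{r-s},
\end{equation*}
which comes from $\prod_{i=1}^{N_{K+1}}(1+tx_i)=\prod_{i=1}^{N_K}(1+tx_i)\cdot\prod_{i=N_K+1}^{N_{K+1}}(1+tx_i)$ and trades excess $g^K$-factors for $g^{K+1}$-factors absorbable into $\la^{(K+1)}$, iterated up the tower of blocks. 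The main obstacle is controlling this cascade: invoking the CF finite-case basis of $\Z[x_1,\dotsc,x_{N_M}]^{W_P}$ for some $M\gg K$ yields a finite expression whose further rewriting (using $g^{M+1}_r,\dotsc$, available because $\nd$ is infinite) terminates in a $\Z$-combination of elements of $\{g_\ladot:\ladot\in\Y^P\}$.
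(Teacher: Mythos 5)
The paper gives no proof at all---it simply cites \cite{Cio2} with the phrase ``by taking a limit''---so you are filling in a detail the authors omitted, and your high-level plan (reduce to Ciocan-Fontanine's finite-rank theorem, then pass to the direct limit) is exactly the intended one. Your linear-independence argument is correct: a finite $\Z$-relation $\sum c_\ladot g_\ladot=0$ with all $\la^{(j)}$ vanishing for $j>K$ involves only the $g_\ladot$ that lift the CF coinvariant basis of the truncation $(n_1,\dotsc,n_{K+1})$, and a lift of a $\Z$-basis of a quotient is $\Z$-independent upstairs.

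The spanning half has a genuine gap. Your block-factorization identity $g^{K+1}_r=\sum_s \tilde e^{(K+1)}_s g^K_{r-s}$ is true, but it is the wrong tool: the $\tilde e^{(K+1)}_s$ are elementary symmetrics in the $(K+1)$-st block alone, and these are \emph{not} among the $g^j_r$, so rewriting a surplus $g^K$-factor by this identity introduces terms that are not standard monomials and there is no visible well-founded quantity forcing the cascade to stop. Your fallback---pass to $M\gg K$ and absorb the leftover $\mu$ into $\la^{(M)}$---also fails in general: if $f$ has degree $d$, the partition $\mu$ in the CF expansion over $\Z[x_1,\dotsc,x_{N_M}]^{W_P}$ may have up to $d$ parts, and one needs $\mu\subset n_{M+1}\times N_M$; when the block sizes are bounded (already in the model case $n_j\equiv 1$, which recovers FGP's Proposition~3.3) one cannot make $n_{M+1}\ge d$, so $\mu$ is never directly absorbable. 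The correct engine here is the $q=0$ specialization of Ciocan-Fontanine's straightening relation (the identity the paper quotes as \cite[(3.5)]{Cio2}, set $q_p=0$), which rewrites a nonstandard product $g^p_i g^p_j$ in terms of standard ones exactly as FGP's relation (3.2) does in the non-parabolic case, together with the accompanying termination argument for the straightening algorithm. Replacing your convolution identity with that relation, and borrowing the termination argument from \cite{Cio2}/\cite{FGP}, closes the gap and completes the proof along the lines you intend.
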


We also observe that

\begin{lem} $\{\Schub_w(x)\mid w\in W^P \}$ is a $\Z$-basis
of $\Z[x]^{W_P}$.
\end{lem}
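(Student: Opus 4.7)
The plan is to prove linear independence and spanning separately, both reducing to properties of the divided difference operators $\partial_i$ acting on Schubert polynomials.

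Linear independence is immediate: since $\{\Schub_w(x)\mid w\in S_\infty\}$ is a $\Z$-basis of $\Z[x]$ (a consequence of Lemmas \ref{L:lead} and \ref{L:code}), any subset is $\Z$-linearly independent.

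For the containment $\Schub_w(x)\in\Z[x]^{W_P}$ when $w\in W^P$, I would use the standard identity $\partial_i\Schub_w=\Schub_{ws_i}$ if $\ell(ws_i)=\ell(w)-1$, and $\partial_i\Schub_w=0$ otherwise. Since $W_P$ is generated by the simple reflections $s_i$ with $i\notin\{N_1,N_2,\dotsc\}$, and minimum-length coset representatives $w\in W^P$ satisfy $ws_i>w$ for every such $s_i$, we get $\partial_i\Schub_w=0$ for all these $i$. Because $\partial_i f=(f-s_if)/(x_i-x_{i+1})$ is a polynomial that vanishes iff $f$ is $s_i$-invariant, $\Schub_w$ is fixed by each generator of $W_P$, hence by $W_P$.

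For spanning, take $f\in\Z[x]^{W_P}$ and expand $f=\sum_{w\in S_\infty}c_w\Schub_w(x)$ with only finitely many nonzero $c_w$. For any simple reflection $s_i$ of $W_P$ we have $\partial_i f=0$, so
\begin{align*}
0=\partial_i f=\sum_{w:\,ws_i<w}c_w\Schub_{ws_i}(x).
\end{align*}
Reindexing by $v=ws_i$ and using linear independence of $\{\Schub_v(x)\}$, this forces $c_{vs_i}=0$ whenever $vs_i>v$, equivalently $c_w=0$ whenever $ws_i<w$. Applying this criterion across all simple reflections $s_i\in W_P$ shows $c_w=0$ unless $ws_i>w$ for every such $s_i$, i.e.\ unless $w\in W^P$. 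Thus $f=\sum_{w\in W^P}c_w\Schub_w(x)$, completing the proof.

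There is no serious obstacle: the only mild subtlety is handling the infinite setting, which is harmless because each element of $\Z[x]^{W_P}$ involves only finitely many variables and finitely many nonzero Schubert coefficients, so the divided-difference argument reduces to a finite calculation.
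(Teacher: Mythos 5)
Your proof is correct. The paper does not actually supply a proof of this lemma --- it is simply stated as an observation --- so there is no argument in the paper to compare against; yours fills that gap with the standard divided-difference argument. Each piece checks out: linear independence follows from the fact that the Schubert polynomials form a basis of $\Z[x]$; the $W_P$-invariance of $\Schub_w(x)$ for $w\in W^P$ follows from $\partial_i\Schub_w=0$ when $\ell(ws_i)>\ell(w)$ (which holds for all $s_i\in W_P$ by the standard characterization of minimum-length coset representatives as having no right descents in $W_P$) together with the equivalence $\partial_i f=0\iff s_i f=f$; and the spanning argument via expanding $f=\sum c_w\Schub_w$ and applying $\partial_i$ for $s_i\in W_P$, then reindexing, correctly shows that $c_w=0$ unless $w$ has no right descent in $W_P$, i.e.\ unless $w\in W^P$. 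The concluding remark about the infinite setting being harmless (since each polynomial involves only finitely many variables and finitely many nonzero Schubert coefficients) is also appropriate and takes care of the only potential subtlety.
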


Define the $\Z[q]$-module automorphism $\qt^P$ of $\Z[x,q]$ by
$\qt(g_\ladot)=G_\ladot$ where 
\begin{align*}
  G_\ladot= \prod_{j\ge1} \prod_{i=1}^{n_{j+1}} G_{\la^{(j)}_i}^j.
\end{align*}
By $\Z[a]$-linearity it defines a $\Z[q,a]$-module automorphism of $\Z[x,q,a]$,
also denoted $\qt^P$.

\begin{lem} \label{L:qparaspec} For all $w\in W^P$,
\begin{align}
\label{E:paraquant}
  \qSchub_w^P(x) &= \qt^P(\Schub_w(x)) \\
\label{E:paraquantequiv}
  \qSchub_w^P(x;a) &= \qt^P(\Schub_w(x;a)).
\end{align}
\end{lem}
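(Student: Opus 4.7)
The plan is to reduce the identity for general $w\in W^P$ to a base case at $w=w_0^P$ and verify the base case directly. The first equation follows from the second by setting $a=0$ and using the $\Z[a]$-linearity of $\qt^P$, so I focus on proving $\qSchub_w^P(x;a)=\qt^P(\Schub_w(x;a))$. A preliminary observation: for $w\in W^P$ one has $ws_i>w$ for every $s_i\in W_P$, hence $\partial_i^x\Schub_w(x;a)=0$ for such $s_i$, and therefore $\Schub_w(x;a)\in\Z[x]^{W_P}[a]$, so $\qt^P$ is defined on it. Moreover, the factorization $ww_0^{(n)}=[w(w_0^P)^{-1}][w_0^P w_0^{(n)}]$ is length-additive for $w\in W^P$, and combining this with \eqref{E:dSchubdef} applied at both $w$ and $w_0^P$ yields the relative divided-difference identity
\[
\Schub_w(x;a)=(-1)^{\ell(w(w_0^P)^{-1})}\partial_{w(w_0^P)^{-1}}^a \Schub_{w_0^P}(x;a).
\]

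Next I would identify the base case: $\Schub_{w_0^P}(x;a)$ equals
\[
F^P(x;a):=\prod_{j=1}^{k-1}\prod_{i=n-N_{j+1}+1}^{n-N_j}\prod_{l=1}^{N_j}(x_l-a_i),
\]
the $q=0$ specialization of the product defining $\qSchub_{w_0^P}^P(x;a)$. A direct permutation computation shows that $w_0^P$ sends each block $\{N_{j-1}+1,\dotsc,N_j\}$ in order to $\{n-N_j+1,\dotsc,n-N_{j-1}\}$, so $\code(w_0^P)$ consists of $n_j$ copies of $n-N_j$ for $j=1,\dotsc,k-1$, followed by zeros. This is weakly decreasing, hence $w_0^P$ is dominant, and the classical product formula for double Schubert polynomials of dominant permutations gives $\Schub_{w_0^P}(x;a)=\prod(x_l-a_i)$ over its Rothe diagram; a direct bijection between this diagram and the index set of $F^P$ completes the identification.

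The remaining step is $\qt^P(F^P)=F_q^P$, where $F_q^P:=\prod_{j,i}\det(D_j-a_i\Id)$. Expanding each factor as $\prod_{l=1}^{N_j}(x_l-a_i)=\sum_{r=0}^{N_j}(-a_i)^{N_j-r}g_r^j$ and distributing, $F^P$ becomes a sum of monomials of the form $\prod_{j,i}(-a_i)^{N_j-r_{j,i}}\prod_{j,i}g_{r_{j,i}}^j$. For each fixed $j$ the factors $g_{r_{j,i}}^j$ commute, so their product depends only on the multiset of the exponents $r_{j,i}$; reordering so the exponents form a partition realizes $\prod_{j,i}g_{r_{j,i}}^j$ as a standard monomial $g_\ladot$. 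Applying $\qt^P$ replaces each $g$ by its quantum analog $G$, and commutativity of the $G_r^j$'s lets us re-assemble the sum as
\[
\qt^P(F^P)=\prod_{j,i}\sum_{r}(-a_i)^{N_j-r}G_r^j=F_q^P.
\]

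For the assembly: since $\qt^P$ is $\Z[a]$-linear, it commutes with $\partial_{w(w_0^P)^{-1}}^a$, so applying $\qt^P$ to the relative divided-difference formula and substituting the two identifications yields $\qt^P(\Schub_w(x;a))=(-1)^{\ell(w(w_0^P)^{-1})}\partial_{w(w_0^P)^{-1}}^a F_q^P=\qSchub_w^P(x;a)$. The main obstacle is the combinatorial verification in the base case: matching the Rothe diagram of $w_0^P$ to the index set of $F^P$. Once that and the standard-monomial observation in the quantization step are in hand, the rest is routine bookkeeping.
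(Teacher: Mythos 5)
Your proof is correct and follows essentially the same route as the paper: commute $\qt^P$ with the $a$-divided differences applied to a top element. The paper states this in one sentence, implicitly taking for granted both the factorization $\partial^a_{ww_0^{(n)}}=\partial^a_{w(w_0^P)^{-1}}\partial^a_{w_0^Pw_0^{(n)}}$ (valid since $\mathrm{Inv}(w)\subseteq\Phi^+\setminus\Phi^+_P=\mathrm{Inv}(w_0^P)$ for $w\in W^P$, so $w\wleq w_0^P$) and the identity $\qt^P(F^P)=\prod_{j,i}\det(D_j-a_i\Id)$; your proposal supplies exactly these verifications, plus the Rothe-diagram matching for $\Schub_{w_0^P}(x;a)=F^P$, so it is a correct expansion of the same argument rather than a different one.
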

\begin{proof} Let $w\in W^P$. We may compute $\qSchub_w^P(x;a)$ by working with respect to
$(n_1,n_2,\dotsc,n_k)$ for some finite $k$; the result is independent of $k$ by the previous subsection.
Then \eqref{E:paraquantequiv} is an immediate consequence of the commutation of $\qt^P$
and the divided difference operators in the $a$ variables. Equation \eqref{E:paraquant}
follows from \eqref{E:paraquantequiv} by setting the $a$ variables to zero.
\end{proof}


\subsection{Cauchy formula}
Keeping the notation of the previous subsection,
we have

\begin{prop} For all $w\in W^P$ we have
\begin{align}
  \qSchub^P_w(x;a) = \sum_{v\wleq w} \Schub_{vw^{-1}}(-a) \qSchub^P_v(x).
\end{align}
\end{prop}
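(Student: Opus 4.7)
The plan is to apply the $\Z[q,a]$-linear parabolic quantization map $\qt^P$ to the classical double Schubert Cauchy formula \eqref{E:dsCauchy}. Since each coefficient $\Schub_{vw^{-1}}(-a)\in\Z[a]$ is a scalar for $\qt^P$, this yields
\[
\qt^P(\Schub_w(x;a)) \;=\; \sum_{v\wleq w} \Schub_{vw^{-1}}(-a)\,\qt^P(\Schub_v(x)).
\]
By \eqref{E:paraquantequiv} the left-hand side equals $\qSchub^P_w(x;a)$, so it remains to identify each summand on the right with $\Schub_{vw^{-1}}(-a)\,\qSchub^P_v(x)$, and then collect.

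To invoke \eqref{E:paraquant} on the individual summands we need each $v$ appearing in the sum to lie in $W^P$. This is the single nontrivial input: the set $W^P$ is downward-closed in left weak order. Suppose for contradiction that $v\wleq w\in W^P$ but $v\notin W^P$, so some $s_i\in W_P$ satisfies $vs_i<v$; write $v=v's_i$ with $\ell(v')=\ell(v)-1$. From $v\wleq w$ we have $w=uv$ length-additive where $u=wv^{-1}$, hence $w=(uv')s_i$, and since $\ell(w)=\ell(u)+\ell(v')+1$ while $\ell(uv')\le \ell(u)+\ell(v')$ and $\ell(uv's_i)\le \ell(uv')+1$, one is forced to $\ell(uv')=\ell(w)-1$. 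Thus $ws_i=uv'$ has length $\ell(w)-1$, contradicting $ws_i>w$ (which holds because $s_i\in W_P$ and $w\in W^P$).

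With this closure property in hand, \eqref{E:paraquant} applies to every $v$ in the sum and gives $\qt^P(\Schub_v(x))=\qSchub^P_v(x)$. Combining the two displays proves the Cauchy formula. There is no substantive obstacle beyond the downward-closure of $W^P$ in left weak order, which as shown above is an elementary length manipulation; everything else is formal $\Z[a]$-linearity of $\qt^P$ together with Lemma \ref{L:qparaspec}.
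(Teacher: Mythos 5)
Your proof is correct and follows exactly the paper's route: apply the $\Z[q,a]$-linear map $\qt^P$ to the double Schubert Cauchy formula \eqref{E:dsCauchy} and invoke Lemma \ref{L:qparaspec} on both sides. The only difference is that you supply a proof of the downward-closure of $W^P$ under left weak order, which the paper merely observes in passing; your length argument for that fact is correct.
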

\begin{proof} Observe that if $v\wleq w$ then $v\in W^P$ so that $\qSchub^P_v(x)$ makes sense. We have
\begin{align*}
  \qSchub^P_w(x;a) &= \qt^p(\Schub_w(x;a)) \\
  &= \qt^P(\sum_{v\wleq w} \Schub_{vw^{-1}}(-a)\Schub_v(x)) \\
  &= \sum_{v\wleq w} \Schub_{vw^{-1}}(-a) \qSchub_v^P(x).
\end{align*}
\end{proof}

\subsection{Parabolic Chevalley-Monk rules}
Fix $\nd=(n_1,n_2,\dotsc,n_k)$ with $\sum_{j=1}^k n_j=n$
and let $P$ be the parabolic defined by $\nd$ and so on.
Let $Q^\vee_P$, $\Phi^+_P$, and $\rho_P$ be respectively the coroot lattice, the
positive roots, and the half sum of positive roots, for the Levi factor of $P$.
Let $\eta_P:Q^\vee\to Q^\vee/Q^\vee_P$ be the natural projection.
Let $\pi_P:W\to W^P$ be the map $w\mapsto w^P$ where $w=w^P w_P$ and $w^P\in W^P$
and $w_P\in W_P$. Define the sets of roots
\begin{align}
\label{E:paraA}
  A_{P,w}^n &= \{\alpha\in\Phi^+\setminus \Phi^+_P\mid \text{$w s_\alpha \gtrdot w$ and $ws_\alpha\in W^P$} \} \\
\label{E:paraB}
  B_{P,w}^n &= \{\alpha\in\Phi^+\setminus \Phi^+_P\mid \text{$\ell(\pi_P(w s_\alpha))=\ell(w)+1-\ip{\alpha^\vee}{2(\rho-\rho_P)}$} \}.
\end{align}
For $\la = \sum_{i=1}^{k-1} b_i \alpha_{N_i}^\vee \in Q^\vee/Q_P^\vee$ with $b_i
\in \Z_{\geq 0}$ we let $q_\lambda = \prod_i
q_i^{b_i}$.  The following is Mihalcea's characterization of $QH^T(SL_n/P)$ which extends Theorem \ref{T:QHTchar}.

\begin{theorem}\label{T:Mipara}\cite{Mi} For $w\in W^P$ and $i\in\{N_1,N_2,\dotsc,N_{k-1}\}$ (that is, $s_i\in W^P$), we have
\begin{align*}
  \sigma^{P,s_i}_T \sigma^{P,w}_T &= (-\omega_i(a)+w\cdot \omega_i(a)) \sigma^{P,w}_T \\
  &+ \sum_{\alpha\in A_{P,w}^n} \ip{\alpha^\vee}{\omega_i} \sigma^{P,ws_\alpha}_T +
  \sum_{\alpha\in B_{P,w}^n} \ip{\alpha^\vee}{\omega_i} q_{\eta_P(\alpha^\vee)} \sigma^{P,\pi_P(ws_\alpha)}_T.
\end{align*}
Moreover these structure constants determine the Schubert basis $\{\sigma^{P,w}_T\mid w\in W^P\}$
and the ring $QH^T(SL_n/P)$ up to isomorphism as $\Z[q_1,\dotsc,q_{k-1};a_1,\dotsc,a_n]$-algebras.
\end{theorem}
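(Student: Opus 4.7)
The plan is to prove the two claims in Theorem \ref{T:Mipara} separately: (i) the equivariant quantum Chevalley--Monk identity for multiplication by a Schubert divisor $\sigma^{P,s_i}_T$ with $i\in\{N_1,\dotsc,N_{k-1}\}$, and (ii) the uniqueness/characterization statement that these structure constants determine the Schubert basis and the $\Z[q,a]$-algebra $QH^T(SL_n/P)$.

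I would handle the uniqueness claim (ii) first, since it is essentially formal. By Theorem \ref{T:mainpara}(2) the Schubert divisors $[X_{s_i}]_T$ with $i\in\{N_1,\dotsc,N_{k-1}\}$ generate $H^T(SL_n/P)$ as a $\Z[a]$-algebra. Since $QH^T(SL_n/P)$ is free over $\Z[q,a]$ with basis $\{\sigma^{P,w}_T\}$ and specialises to $H^T(SL_n/P)$ at $q=0$, Nakayama's lemma lifts this to the statement that the divisors $\sigma^{P,s_i}_T$ generate $QH^T(SL_n/P)$ as a $\Z[q,a]$-algebra. Given the Chevalley--Monk rule of (i), one can then compute every product $\sigma^{P,u}_T\sigma^{P,w}_T$ inductively by first writing $\sigma^{P,u}_T$ as a polynomial in the divisor classes and then applying the rule term by term. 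Hence any abstract $\Z[q,a]$-algebra equipped with a basis indexed by $W^P$ and obeying the displayed formula is forced to have the same multiplication table, yielding the desired characterization.

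The heart of the theorem is (i). The starting point is the classical equivariant Chevalley--Monk rule in $H^T(SL_n/P)$, the parabolic analogue of Proposition \ref{P:EChev} obtained by restriction to $W_P$-invariants and retention of only those reflections $ws_\alpha$ that lie in $W^P$; this yields the $(-\omega_i(a)+w\cdot\omega_i(a))\sigma^{P,w}_T$ term and the $A^n_{P,w}$ sum. The quantum corrections arise from the three-point equivariant Gromov--Witten invariants $\langle \sigma^{P,s_i}_T,\sigma^{P,w}_T,\sigma^{P,v}_T\rangle_d$ with $d\in Q^\vee/Q^\vee_P$ nonzero. By the equivariant divisor axiom, such an invariant factors as $\ip{d}{\omega_i}$ times a two-point invariant $\langle \sigma^{P,w}_T,\sigma^{P,v}_T\rangle_d$, so the coefficient of $\sigma^{P,v}_T$ in the quantum correction is controlled by these two-point invariants.

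The main obstacle is the geometric identification of these two-point invariants: one must show that $\langle \sigma^{P,w}_T,\sigma^{P,v}_T\rangle_d$ is nonzero precisely when $d=\eta_P(\alpha^\vee)$ and $v=\pi_P(ws_\alpha)$ for some $\alpha\in B^n_{P,w}$, with each contribution equal to $1$. Two routes are natural. One can work directly on $SL_n/P$ with an equivariant enhancement of the Fulton--Woodward analysis of minimal degrees of rational curves joining two Schubert varieties, pinpointing the length condition $\ell(\pi_P(ws_\alpha))=\ell(w)+1-\ip{\alpha^\vee}{2(\rho-\rho_P)}$ that cuts out $B^n_{P,w}$. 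Alternatively one may reduce to the complete-flag version $P=B$ already recorded as Theorem \ref{T:QHTchar}, via a Peterson-type comparison of $QH^T(SL_n/B)$ and $QH^T(SL_n/P)$ that matches $A^n_w,B^n_w$ to $A^n_{P,w},B^n_{P,w}$ under $\pi_P$ and $\eta_P$. Either way, the delicate combinatorial core is the identification of $B^n_{P,w}$ with the set of reflections whose associated rational curves are both of minimal degree and compatible with the parabolic projection.
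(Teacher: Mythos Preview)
The paper does not prove Theorem~\ref{T:Mipara}; it is quoted verbatim as a result of Mihalcea \cite{Mi} and used as a black box in the proof of Theorem~\ref{T:mainpara}(4). There is therefore no ``paper's own proof'' to compare your proposal against.

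That said, a brief comment on your outline. Your treatment of (ii) is essentially the right idea and is close in spirit to Mihalcea's actual argument: once the divisor classes generate over $\Z[q,a]$, the Chevalley rule recursively determines the full multiplication table. For (i), your sketch correctly identifies the classical equivariant Chevalley term and the role of the divisor axiom, and you correctly locate the crux in identifying the nonvanishing two-point invariants with the set $B^n_{P,w}$. But note that neither route you propose is actually carried out, and the second route (reduce to $P=B$ via a Peterson-type comparison) is anachronistic here: in the logic of the present paper and of \cite{Mi}, the parabolic equivariant Chevalley formula is an input, not a consequence of the Borel case. Mihalcea's own proof in \cite{Mi} proceeds by a direct equivariant Gromov--Witten computation together with a positivity/uniqueness argument, closer to your first route. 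If you want a self-contained argument, you should commit to that route and actually verify the two-point invariant computation rather than just naming it.
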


We now move to the context in which $(n_1,n_2,\dotsc)$ is an infinite sequence of positive integers.
Let $A_{P,w}$ and $B_{P,w}$ be the analogues of the sets of roots defined in \eqref{E:paraA} and \eqref{E:paraB}
where $\rho=(0,-1,-2,\dotsc)$ and
$\rho_P$ is the juxtaposition of $(0,-1,\dotsc,1-n_j)$ for $j\ge1$.

\begin{prop}\label{P:pqeChev} For $w\in W^P$ and $i$ such that $i\in \{N_1,N_2,\dotsc,\}$
(that is, $s_i\in W^P$), the parabolic quantum double Schubert polynomials satisfy the
identity
\begin{align*}
  \qSchub^P_{s_i}(x;a) \qSchub_w^P(x;a) &= (-\omega_i(a)+w\cdot \omega_i(a)) \qSchub_w^P(x;a) +
  \sum_{\alpha\in A_{P,w}} \ip{\alpha^\vee}{\omega_i} \qSchub_{ws_\alpha}^P(x;a) \\
  &+ \,\sum_{\alpha\in B_{P,w}} \ip{\alpha^\vee}{\omega_i} q_{\eta_P(\alpha^\vee)} \qSchub_{\pi_P(ws_\alpha)}^P(x;a).
\end{align*}
\end{prop}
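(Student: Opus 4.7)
The plan is to mirror the proof of Proposition \ref{P:QEChevalley}, substituting parabolic analogues at each step. Three main ingredients are required: the parabolic Cauchy formula just established; the parabolic analogue of Lemma \ref{L:qdsreflection}, giving $\qSchub^P_{s_i}(x) = \omega_i(x)$ and $\qSchub^P_{s_i}(x;a) = \omega_i(x) - \omega_i(a)$ for $s_i \in W^P$ (the proof is the same); and the non-equivariant parabolic quantum Chevalley-Monk formula as a polynomial identity in $\Z[x]^{W_P}[q]$,
\begin{align*}
\omega_i(x)\qSchub^P_v(x) &= \sum_{\alpha\in A_{P,v}}\ip{\alpha^\vee}{\omega_i}\qSchub^P_{vs_\alpha}(x) + \sum_{\alpha\in B_{P,v}}q_{\eta_P(\alpha^\vee)}\ip{\alpha^\vee}{\omega_i}\qSchub^P_{\pi_P(vs_\alpha)}(x),
\end{align*}
which follows from Ciocan-Fontanine \cite{Cio2} combined with a stabilization argument analogous to the proof of Proposition \ref{P:ECpoly}.

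Using the first two ingredients I compute
\begin{align*}
\qSchub^P_{s_i}(x;a)\,\qSchub^P_w(x;a)
&= \sum_{v\wleq w}\Schub_{vw^{-1}}(-a)\,\omega_i(x)\qSchub^P_v(x) - \omega_i(a)\,\qSchub^P_w(x;a),
\end{align*}
and then expand each $\omega_i(x)\qSchub^P_v(x)$ via the third ingredient. Separately, expanding the target right-hand side via the parabolic Cauchy formula applied to each $\qSchub^P_{ws_\alpha}(x;a)$ (for $\alpha\in A_{P,w}$) and $\qSchub^P_{\pi_P(ws_\alpha)}(x;a)$ (for $\alpha\in B_{P,w}$) yields a parallel expression. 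Matching coefficients of $\qSchub^P_u(x)$ in the two expressions reduces the proposition to two combinatorial identities, one for the $A$-type and one for the $B$-type terms, the former additionally accounting for the equivariant correction $(w\cdot\omega_i(a))\,\qSchub^P_w(x;a)$.

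The hard part is verifying these matchings. For the $B$-matching I expect a bijection
\begin{align*}
\{(v,\alpha) : v\wleq w,\,\alpha\in B_{P,v}\}\;\longleftrightarrow\;\{(u,\alpha) : u\wleq\pi_P(ws_\alpha),\,\alpha\in B_{P,w}\},
\end{align*}
given by $(v,\alpha)\mapsto(\pi_P(vs_\alpha),\alpha)$, generalizing the bijection in the proof of Proposition \ref{P:QEChevalley}. Establishing this requires careful analysis of the interaction of weak order, Bruhat order, and the projection $\pi_P:W\to W^P$ in the presence of the Levi $W_P$; this is the principal technical hurdle. The $A$-matching should follow by an analogous bijection argument $(v,\alpha)\mapsto(vs_\alpha,\alpha)$ combined with Proposition \ref{P:ECpoly} applied to $w\in W^P$ to supply the equivariant correction $(w\cdot\omega_i(a))\,\qSchub^P_w(x;a)$.
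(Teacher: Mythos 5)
Your outline follows the paper's own sketch closely: parabolic Cauchy formula, reflection formula $\qSchub^P_{s_i}(x;a)=\omega_i(x)-\omega_i(a)$, non-equivariant parabolic quantum Chevalley polynomial identity, and then a bijective matching of correction terms, with the $B$-type bijection $(v,\alpha)\mapsto(\pi_P(vs_\alpha),\alpha)$ correctly identified as the crux. So the strategy is the same. But you leave the hard part unproved, and there are two concrete ingredients you are missing that the paper uses to make the bijection actually work.

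First, the definition of $B_{P,v}$ only controls $\ell(\pi_P(vs_\alpha))$ --- namely $\ell(\pi_P(vs_\alpha)) = \ell(v)+1-\ip{\alpha^\vee}{2(\rho-\rho_P)}$ --- but the length-additivity argument for the bijection needs to know the length of $vs_\alpha$ itself, specifically that $v = (vs_\alpha)\cdot s_\alpha$ is length-additive, i.e.\ $\ell(vs_\alpha)=\ell(v)+1-\ip{\alpha^\vee}{2\rho}$. The paper supplies this via \cite[Lemma 10.14]{LamSh:qaf}, which asserts that $\alpha\in B_{P,w}$ automatically forces the stronger condition $\ell(ws_\alpha)=\ell(w)+1-\ip{\alpha^\vee}{2\rho}$. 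Without this input the chain of length-additive factorizations $w=(wv^{-1})\cdot v$, $v=(vs_\alpha)\cdot s_\alpha$, $vs_\alpha=\pi_P(vs_\alpha)\cdot x$ (with $x\in W_P$) cannot be assembled, and you cannot conclude $\pi_P(vs_\alpha)\wleq\pi_P(ws_\alpha)$ or that $\alpha\in B_{P,w}$.

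Second, your matching of coefficients of $\qSchub^P_u(x)$ silently requires that the double Schubert polynomial indices agree, i.e.\ that $vw^{-1}=\pi_P(vs_\alpha)\,(\pi_P(ws_\alpha))^{-1}$ for $(v,\alpha)$ in the $B$-set. In the non-parabolic case this is an obvious cancellation ($vw^{-1}=(vs_\alpha)(ws_\alpha)^{-1}$), but in the parabolic case it is not formal: it falls out of the same length-additive factorization analysis (from $ws_\alpha=(wv^{-1})\cdot\pi_P(vs_\alpha)\cdot x$ one deduces $\pi_P(ws_\alpha)=(wv^{-1})\pi_P(vs_\alpha)$). Your write-up does not flag this compatibility condition, and it is exactly the point where naively carrying over the non-parabolic argument would fail. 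The paper records it explicitly as the extra requirement on the bijection. The $A$-type matching and the equivariant correction are indeed handled as you expect --- they come for free after quantizing the (parabolic specialization of) Proposition~\ref{P:ECpoly} --- so no new difficulty arises there.
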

\begin{proof}[Sketch of proof] The proof proceeds entirely analogously to that of 
Proposition \ref{P:QEChevalley}, starting with the Chevalley-Monk formula
for double Schubert polynomials (applied in the special case that $w\in W^P$ and $s_i\in W^P$)
and reducing to the following identity:
\begin{align*}
  & \,\,\sum_{v\wleq w} \Schub_{vw^{-1}}(-a) \sum_{\alpha\in B_{P,v}} q_{\eta_P(\alpha^\vee)} \ip{\alpha^\vee}{\omega_i}
  \qSchub_{\pi_P(vs_\alpha)}^P(x) \\
  &= \sum_{\alpha\in B_{P,w}} \ip{\alpha^\vee}{\omega_i} q_{\eta_P(\alpha^\vee)} \sum_{v\wleq \pi_P(ws_\alpha)}
  \Schub_{v(\pi_P(ws_\alpha))^{-1}}(-a) \qSchub^P_v(x).
\end{align*}
For this it suffices to show that there is a bijection $(v,\alpha)\mapsto(\pi_P(vs_\alpha),\alpha)$
and inverse bijection $(u,\alpha)\mapsto (\pi_P(us_\alpha),\alpha)$ between the sets
\begin{align*}
  A &= \{(v,\alpha)\in W^P \times (\Phi^+\setminus \Phi^+_P)\mid \text{$v\wleq w$ and $\alpha\in B_{P,v}$} \} \\
  B &= \{(u,\alpha)\in W^P \times (\Phi^+\setminus \Phi^+_P)\mid \text{$u\wleq \pi_P(ws_\alpha)$ and $\alpha\in B_{P,w}$} \}
\end{align*}
such that for $(v,\alpha)\in A$ we have
\begin{align}\label{E:Schubindex}
  vw^{-1} = \pi_P(vs_\alpha) (\pi_P(ws_\alpha))^{-1}.
\end{align}
To establish this bijection we use \cite[Lemma 10.14]{LamSh:qaf}, which asserts that elements $\alpha\in B_{P,w}$ 
automatically satisfy the additional condition $\ell(ws_\alpha) = \ell(w) + 1 - \ip{\alpha^\vee}{2\rho}$.

Let $(v,\alpha)\in A$. Then we have length-additive factorizations
$w = (wv^{-1})\cdot v$, $v=(vs_\alpha)\cdot s_\alpha$, and $vs_\alpha=\pi_P(vs_\alpha) \cdot x$ for some $x\in W_P$.
Therefore $w=(wv^{-1})\cdot \pi_P(vs_\alpha) \cdot x \cdot s_\alpha $ is length-additive.
This implies that $ws_\alpha = (wv^{-1})\cdot \pi_P(vs_\alpha) \cdot x$ is length-additive.
One may deduce from this that $\pi_P(ws_\alpha)=(wv^{-1})\pi_P(vs_\alpha)$ and therefore that
\eqref{E:Schubindex} holds. The conditions that $(\pi_P(vs_\alpha),\alpha)\in B$ are readily verified.
This shows that the map $A\to B$ is well-defined. The rest of the proof is similar.
\end{proof}

\subsection{Proof of Theorem \ref{T:mainpara}(4)}
The proof is analogous to the proof of Theorem \ref{T:main}.

Given the finite sequence $(n_1,\dotsc,n_k)$ and parabolic subgroup $P$,
consider the extension $(n_1,\dotsc,n_k,1,1,1,\dotsc)$ by an infinite sequence of $1$s. 
We use the notation $P_\infty$ to label the corresponding objects. That is,
$W_P\cong W_{P_\infty}$ is the subgroup of $S_\infty$, $W^{P_\infty}$ is the set of minimum-length coset representatives
in $S_\infty / W_{P_\infty}$, and so on.

Let $R_P=\Z[x,a]^{W_{P_\infty}}$ and $R^q_P = \Z[x,q,a]^{W_{P_\infty}}$,
where $x = (x_1,x_2,\dotsc)$, $q = (q_1,q_2,\ldots)$, and $q_1,q_2,\dotsc,q_{k-1}$ are identified with the quantum parameters in Theorem \ref{T:mainpara}(4).  Define $J^a_{P,\infty}$ to be the ideal of $R_P$ generated by $g_i^p(x)-e_i^{N_p}(a)$ for $i\ge1$ and $p\ge k$
and by $a_i$ for $i>n$. Note that for $p=k+j$ with $j\ge0$ we have $g_i^p(x)-e_i^{N_p}(a)=e_i^{n+j}(x)-e_i^{n+j}(a)$.

Let $I^a_{P,\infty}$ be the $\Z[a]$-submodule of $R_P$ spanned by $\Schub_w(x;a)$ for $w \in W^{P_\infty} \setminus S_n$, and by $a_i\Schub_u(x;a)$ for $i > n$ and any $u\in W^{P_\infty}$.  We claim that $I^a_{P,\infty}$ is an ideal.  This follows easily from the fact that the only double Schubert polynomials occurring  in the expansion of a product $\Schub_w(x;a) \Schub_v(x;a)$ lie above $w$ and $v$ in Bruhat order.  But then it follows from Theorem \ref{T:mainpara}(2) and dimension counting that $I^a_{P,\infty} = J^a_{P,\infty}$.

Define $J^{qa}_{P,\infty}$ to be the ideal of $R^q_P$ generated by $G_i^p-e_i^{N_p}(a)$ for $p\ge k$ and $i\ge 1$,
and $a_i$ for $i > n$, and $q_i$ for $i \ge k$. We claim that $\qSchub^P_w(x;a) \in J^{qa}_{P,\infty}$ for $w \in W^{P_\infty} \setminus S_n$.  This would follow from the definitions if we could establish that $\theta^{P_\infty}(J^a_{P,\infty}) \subset J^{qa}_{P,\infty}$.  Since $\theta^{P_\infty}$ is trivial on the equivariant variables $a_1,a_2,\ldots$, it suffices to show that $\qt^{P_\infty}(g_{\ladot}^P(g_i^p(x)-e_i^{N_p}(a))) \in J^{qa}_{P,\infty}$, for each $i\ge1$, $p\ge k$, and each standard monomial $g_{\ladot}^P$.  To apply $\qt^{P_\infty}$ to $g_{\ladot}^P g_i^p$ we must first standardize this monomial.  This can be achieved by using a parabolic version of the straightening algorithm of \cite[Proposition 3.3]{FGP}.  The only nonstandard part of $g_{\ladot}^P g_i^p$ is the possible presence of a factor $g_j^p g_i^p$.  This can be standardised using only the (non-parabolic) relation \cite[(3.2)]{FGP} -- any factors $g_m^{k'}$ for $k'<k$ in $g_{\ladot}^P g_i^p$ are not modified.  Similarly, the product $G_{\ladot}^P G_i^p$ can be standardized using the following variant of \cite[Lemma 3.5]{FGP} which can be deduced from \cite[(3.5)]{Cio2} :
$$
G^p_i G^{p+1}_{j+1}+ G^p_{i+1}G^p_{j} \pm q_p G^{p-1}_{i-n_{p+1}-n_p} G^p_j
= G^p_j G^{p+1}_{i+1} +G^p_{k+1} G^p_i \pm q_p G^{p-1}_{j-n_{p+1}-n_p}G^p_i.
$$
We note that when $p\ge k$, this relation only involves quantum variables $q_k, q_{k+1}, \dotsc$.
Thus modulo $q_k,q_{k+1},\dotsc$, the straightening relation for $g_{\ladot}^P g_i^p$ and for $G_{\ladot}^PG_i^p$ coincide.  It follows that $\qt^{P_\infty}(g_{\ladot}^P(g_i^p(x)-e_i^{N_p}(a)) = G_{\ladot}^P(G_i^p - e_i^{N_p}(a))  \mod J^{qa}_{P,\infty}$, and thus $\qSchub^P_w(x;a) \in J^{qa}_{P,\infty}$ for $w \in W^{P_\infty} \setminus S_n$.  But $R^{qa}_{P,\infty}/J^{qa}_{P,\infty}$ has rank $|W^P|$ over $Z[q_1,\ldots,q_{k-1},a_1,\ldots,a_n]$ and so it follows that $J^{qa}_{P,\infty}$ is spanned by $\qSchub^P_w(x;a) \in J^{qa}_{P,\infty}$ for $w \in W^{P_\infty} \setminus S_n$ together with $q_i\qSchub^P_u(x;a)$ for $i\ge k$, $a_i\qSchub^P_u(x;a)$ for $i > n$ and any $u\in W^{P_\infty}$. 

Theorem \ref{T:mainpara}(4) now follows from Proposition \ref{P:pqeChev} and the determination of $QH^T(SL_n/P)$ in Theorem \ref{T:Mipara}.

\end{document}